\newtheorem{theorem}{Theorem}[section]
\newtheorem{lemma}[theorem]{Lemma}
\theoremstyle{definition}
\theoremstyle{remark}
\newtheorem{remark}[theorem]{Remark}
\theoremstyle{cor}
\newtheorem{cor}[theorem]{Corollary}
\theoremstyle{prop}
\numberwithin{equation}{section}
\newcommand{\ob}[1]{\mkern 1.5mu\overline{\mkern-1.5mu#1\mkern-1.5mu}\mkern 1.5mu}
\newcommand{\N}{\mathbb{N}}
\newcommand{\R}{\mathbb{R}}
\newcommand{\eps}{\varepsilon}
\newcommand{\dl}{\delta}
\newcommand{\bone}{\mathbf{1}}
\newcommand{\wg}{\wedge}
\newcommand{\relmiddle}[1]{\mathrel{}\middle#1\mathrel{}}
\DeclareMathOperator*{\esssup}{ess\,sup}
\begin{document}

\title{Transition densities of reflecting Brownian motions on Lipschitz domains}


\author{Kouhei Matsuura}
\address{Department of Mathematics, Kyoto University, Kyoto 606–8502, Japan}
\curraddr{}
\email{koumatsu@math.kyoto-u.ac.jp}
\thanks{}

\subjclass[2010]{35K08, 47D07}

\keywords{heat kernel, reflecting Brownian motion, Lipschitz domain, Dirichlet form, boundary local time}

\date{}

\dedicatory{}

\begin{abstract}
In this paper, we study the continuity of the transition density of the reflecting Brownian motion on a general Lipschitz domain. We also provide local estimates for the density. Applying the estimates, we prove that the surface measure on the domain is in the local Kato class of the reflecting Brownian motion.
\end{abstract}
\maketitle

\section{Introduction}
Let $D$ be a connected open subset of $\R^d$, $d \ge 2$. We denote by $\ob{D}$ the closure of $D$ in $\R^d$.  We denote by $H^{1}(D)$ the first order Sobolev space on $D$ with the Neumann boundary condition. For each $ f,g \in H^{1}(D)$, we define $\mathcal{E}(f,g)=(1/2)\int_{D}(\nabla f, \nabla g)\,dm$, where $m$ is the Lebesgue measure on $D$ and $(\cdot, \cdot)$ is the standard inner product on $\R^d$. $\nabla f$ and $\nabla g$ denote the distributional derivatives of $f$ and $g$, respectively. If the boundary of $D$ is locally expressible as a graph of a continuous function of $(d-1)$-variables, the Dirichlet form $( \mathcal{E}, H^{1}(D))$ is regular on $L^{2}(\ob{D},m)$ (see \cite[Chapter~V, Theorem~4.7]{EE} for details), and generates a diffusion process $X=(\{X_t\}_{t \ge 0},\{ P_x\}_{x \in \ob{D}})$ on $\ob{D}$. We call $X$ the {\it reflecting Brownian motion} (RBM in abbreviation) on $\ob{D}$.  We denote by $p_{t}(x,dy)$ the transition probability of $X$. Under suitable assumptions on $D$, the transition function is absolutely continuous with respect to $m$, and the density $p_{t}(x,y)$ is called the {\it heat kernel} of $X$. 

When the heat kernel $p_{t}(x,y)$ is continuous on $\ob{D} \times \ob{D}$? Bass and Hsu show in \cite[Lemma~4.3]{BH} that $p_{t}(x,y)$ is continuous if $D$ is a bounded Lipschitz domain. 
In \cite{FT}, Fukushima and Tomisaki  prove the continuity of the resolvent density for the RBM on the closure of a global Lipschitz domain with H\"{o}lder cusps. See \cite[Theorem~2.1~(iii)]{FT} for details. In \cite[Proposition~3.4]{FT}, they also prove the Sobolev type inequality: there exist positive constants $S>0$ and $p >2$ such that
\begin{equation}
\|f\|_{L^{p}(D,m)} \le S \|f\|_{H^{1}(D)} \label{eq:sobolev}
\end{equation}
for any $f \in H^{1}(D)$. Here, we denote by $\|\cdot \|_{H^{1}(D)}$ the standard norm on $H^{1}(D)$. By \eqref{eq:sobolev} and \cite[Theorem~6.10]{O}, for any $\eps>0$, there exist positive constants $a_\eps ,b_\eps \in (0,\infty)$ depending on $d$ and $D$, and $\eps>0$ such that 
\begin{equation}
p_{t}(x,y) \le a_{\eps}e^{\eps t} t^{-d/2} \exp\left(-\frac{|x-y|^2}{b_{\eps}t} \right) \label{eq:gauss}
\end{equation}
for any $t>0$ and $m \otimes m$-a.e. $(x,y) \in D \times D$. Here, we denote by $|\cdot|$ the Euclidean norm on $\R^d$. It is also known that \eqref{eq:gauss} implies \eqref{eq:sobolev}. See  \cite[Theorem~4.2.7]{FOT} for details.

Although the framework of \cite{FT} is seemingly wide, there are many domains on which \eqref{eq:sobolev} does not hold. For example, the Sobolev type inequality \eqref{eq:sobolev} does not hold on the domain $D_H$ defined as
\begin{align*}
&D_H=\{(x,y) \in \R \times \R^{d} \mid x>1,\ |y|<H(x)\},
 \end{align*}
 where $H(x)$ is a positive continuous function on $\R$ such that $\lim_{x \to \infty}H(x)=0$. An essential reason why the the Sobolev type inequality \eqref{eq:sobolev} fails on $D_H$ is a presence of a cusp at infinity.
 Hence, $D_H$ is not a Lipschitz domains in the sense of \cite{FT}. Relatively recently, Gyrya and Saloff-Coste prove in \cite[Theorem~3.10]{GS} that heat kernels of the RBMs on inner uniform domains are continuous. However, $D_H$ is not an inner uniform domain. It seems that there is no preceding results which prove the continuity of the heat kernels of RBMs on horn-shaped domains like as $D_H$.

In this paper, we obtain the continuity of the heat kernel of the RBM on a general Lipschitz domain. For the proof, it is important to show that part processes of the RBM are identified with part processes of RBMs on bounded Lipschitz domains (Lemma~\ref{lem:ind}). This kind of argument is found in \cite[Lemma~6.2]{Ma}, where the author essentially uses the theory of Sobolev extension domains. We use the theorem of the spectral synthesis, and the proof of Lemma~\ref{lem:ind} is much simpler than that of \cite[Lemma~6.2]{Ma}. Combining Lemma~\ref{lem:ind} with the result of Grigor'yan and Kajino~\cite[Theorem~1.1]{GK}, we also establish local estimates of the heat kernel of the RBM (Theorem~\ref{thm:thm1}~(2)). We apply the estimates to prove that the surface measure on the boundary of a Lipschitz domain is in the local Kato class of the RBM on it (Theorem~\ref{localkato}). To classify measures in this way is important in the transformation theory of the Markov processes. See \cite{CK} and \cite{KKT} for the transformation theory and its applications. The local estimates will also be used in \cite{Ma2} to study the $L^p$-spectral independence of Neumann Laplacians on horn-shaped domains.\\

\noindent
{\it Notation.}
\noindent
Throughout this paper, we adopt the following notation.
\begin{itemize}
\item[(1)] For a topological space $E$, we denote by $\mathcal{B}(E)$ the Borel $\sigma$-algebra on $E$. For each $p \in [1,\infty]$ and each positive Borel measure $\mu$ on $E$, we denote by $L^{p}(E,\mu)$ the $L^p$-space on $(E,\mu)$. For each $f : E \to \R$, we write $\|f\|_{E,\infty}$ for $\sup_{x \in E}|f(x)|$. We also write 
\begin{align*}
\mathcal{B}_{b}(E)&=\{f:E \to \R \mid f\text{ is Borel measurable and }\|f\|_{E,\infty}<\infty\}, \\
C(E)&=\{f:E \to \R \mid f \text{ is continuous on }E\},\\
C_{b}(E)&=C(E) \cap \mathcal{B}_{b}(E), \\
C_{c}(E)&=\{f:E \to \R \mid \text{support of }f \text{ is a compact subset of }E\}.
\end{align*}
We denote by $C_{\infty}(E)$ the completion of $C_{c}(E)$ with respect to the norm $\|\cdot\|_{E,\infty}$.
 \item[(2)]
 Let $d \ge 2$ be an integer. $B(x,R)$ denotes open ball of $\R^d$ centered at $x \in \R^d$ with radius $R>0$. If $x$ is the origin  of $\R^{d}$, we write $B(R)$ for $B(x,R)$. The $d$-dimensional Lebesgue measure is denoted by $m$ or $dx$. 
For an open subset $E \subset \R^d$, we define $H^{1}(E)$ by
\begin{align*}
H^{1}(E)&=\left\{ f \in L^{2}(E,m) \relmiddle|  \frac{\partial f}{\partial x_i}\in L^{2}(E,m), 1 \le i \le d\right\}, 
\end{align*}
where $\partial f/\partial x_i$ is the distributional derivative of $f$ on $E$. For each $f\in H^1(E)$, we set $\| f\|_{H^1(E)}^2 := \sum_{i=1}^{d}\int_{E}|\partial_i f|^{2}\,dm+\int_{E}|f|^{2}\,dm.$
\end{itemize}

\section{Main results}
Let $d \ge 2$ be an integer and $D$ a connected open subset of $\R^d$. We denote by $\ob{D}$ the closure of $D$ in $\R^d$. In what follows, we assume that $D$ is a Lipschitz domain in the sense that:
\begin{itemize}
\item[] for any compact subset $K$ of $\ob{D}$, there exists a bounded open subset $U$ of $\R^d$ such that $K \subset U$ and $D\cap U$ is a bounded Lipschitz domain of $\R^d$.
\end{itemize}
See \cite[Definition~7.1]{Ma} for the definition of bounded Lipschitz domains. For each $f,g \in H^{1}(D)$, we define $\mathcal{E}(f,g)$ by
\begin{equation*}
\mathcal{E}(f,g)=\frac{1}{2}\sum_{i=1}^{d}\int_{D}\partial_i f\partial_i g\,dm.
\end{equation*}
It is shown in \cite[Proposition~7.3]{Ma} that $(\mathcal{E},H^{1}(D))$ becomes a regular Dirichlet form on $L^{2}(\ob{D},m)$. That is, $H^{1}(D) \cap C_{c}(\ob{D})$ is a dense subspace of $(H^{1}(D), \|\cdot\|_{H^{1}(D)})$ and of $(C_{c}(\ob{D}), \|\cdot \|_{\ob{D}, \infty})$. It is shown in  \cite[Theorem~6.10]{Ma} that $(\mathcal{E},H^{1}(D))$ generates a Hunt process on $\ob{D}$ with the semigroup strong Feller property:
\begin{theorem}\label{thm:str} There exists a Hunt process $X=( \{X_t\}_{t \ge 0}, \{P_x\}_{x \in \ob{D}})$ associated with $(\mathcal{E},H^{1}(D))$ such that whose semigroup $\{p_t\}_{t>0}$ satisfies the following: for any $f \in \mathcal{B}_{b}(\ob{D})$ and $t>0$, $p_{t}f \in C_{b}(\ob{D})$. 
\end{theorem}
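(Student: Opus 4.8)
The plan is to obtain the Hunt process from general Dirichlet form theory and then to deduce the strong Feller property by a localization argument that transfers everything to bounded Lipschitz pieces, where the estimates of Bass--Hsu apply. Since $(\mathcal{E},H^{1}(D))$ is a regular, strongly local Dirichlet form on $L^{2}(\ob{D},m)$ (\cite[Proposition~7.3]{Ma}), the theory of Fukushima--Oshima--Takeda yields an $m$-symmetric diffusion $X=(\{X_{t}\}_{t\ge0},\{P_{x}\}_{x\in\ob{D}})$ on $\ob{D}$ properly associated with it. A preliminary run of the construction below, with an exhaustion $\ob{D}=\bigcup_{n}G_{n}$ by relatively open sets of the type described next and using that reflecting Brownian motions on bounded Lipschitz domains have heat kernels (\cite[Lemma~4.3]{BH}) together with Lemma~\ref{lem:ind}, shows $p_{t}(x,\cdot)\ll m$ for every $x$; absolute continuity then lets one realize $X$ with $P_{x}$ defined for every $x\in\ob{D}$ and $\{p_{t}\}_{t>0}$ its genuine transition semigroup. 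Since $\|p_{t}f\|_{\ob{D},\infty}\le\|f\|_{\ob{D},\infty}$ by Markovianity, the only point is the continuity of $p_{t}f$ for $f\in\mathcal{B}_{b}(\ob{D})$ and $t>0$.

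Fix such $f$, $t$, and a point $x_{0}\in\ob{D}$. For $R>0$ let $K_{R}:=\ob{B(x_{0},R)}\cap\ob{D}$; by the standing Lipschitz assumption choose a bounded open $U_{R}\supset K_{R}$ with $\Omega_{R}:=D\cap U_{R}$ a bounded Lipschitz domain, and set $G_{R}:=\ob{D}\cap U_{R}$, which is relatively open in $\ob{D}$ and, since $U_{R}$ is open, equals $\ob{\Omega_{R}}\cap U_{R}$; note $x_{0}\in G_{R}$ and, because $B(x_{0},R)\cap\ob{D}\subset U_{R}$, every point of $\ob{D}\setminus U_{R}$ lies at Euclidean distance $\ge R$ from $x_{0}$. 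Let $X^{\Omega_{R}}$ be the reflecting Brownian motion on $\ob{\Omega_{R}}$, and let $\tau_{R}$, $\sigma_{R}$ be the first exit times from $G_{R}$ of $X$ and of $X^{\Omega_{R}}$. By Lemma~\ref{lem:ind} the part of $X$ on $G_{R}$ coincides with the part of $X^{\Omega_{R}}$ on $G_{R}$; hence $E_{x}[f(X_{t});\,t<\tau_{R}]=E_{x}^{\Omega_{R}}[f(X_{t});\,t<\sigma_{R}]$ and $P_{x}(\tau_{R}\le t)=P_{x}^{\Omega_{R}}(\sigma_{R}\le t)$ for $x\in G_{R}$, so that
\[
\bigl|p_{t}f(x)-E_{x}^{\Omega_{R}}[f(X_{t})]\bigr|\le 2\|f\|_{\ob{D},\infty}\,P_{x}^{\Omega_{R}}(\sigma_{R}\le t).
\]

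Because $\Omega_{R}$ is a bounded Lipschitz domain, the Sobolev inequality \eqref{eq:sobolev}, hence the Gaussian upper bound \eqref{eq:gauss}, holds for $X^{\Omega_{R}}$, and since $X^{\Omega_{R}}$ has continuous paths, $\{\sigma_{R}\le t\}\subset\{\sup_{s\le t}|X_{s}-x_{0}|\ge R\}$; a standard maximal inequality deduced from \eqref{eq:gauss} then gives $P_{x}^{\Omega_{R}}(\sigma_{R}\le t)\le C(t)\exp(-c(t)R^{2})$ for all $x$ with $|x-x_{0}|\le R/2$, with constants $C(t),c(t)\in(0,\infty)$. On the other hand \cite[Lemma~4.3]{BH} gives that the semigroup of $X^{\Omega_{R}}$ is strong Feller, so $x\mapsto E_{x}^{\Omega_{R}}[f(X_{t})]$ is continuous on $\ob{\Omega_{R}}\supset G_{R}$. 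Combining, for $|x-x_{0}|\le R/2$,
\[
|p_{t}f(x)-p_{t}f(x_{0})|\le 4\|f\|_{\ob{D},\infty}\,C(t)\exp(-c(t)R^{2})+\bigl|E_{x}^{\Omega_{R}}[f(X_{t})]-E_{x_{0}}^{\Omega_{R}}[f(X_{t})]\bigr|,
\]
so $\limsup_{x\to x_{0}}|p_{t}f(x)-p_{t}f(x_{0})|\le 4\|f\|_{\ob{D},\infty}\,C(t)\exp(-c(t)R^{2})$; letting $R\to\infty$ shows $p_{t}f$ is continuous at $x_{0}$, and since $x_{0}$ is arbitrary, $p_{t}f\in C_{b}(\ob{D})$.

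I expect the genuine work to lie in the two quoted inputs and in matching them up: establishing Lemma~\ref{lem:ind} (so that the part processes, the exit times, and the quantity $E_{x}[f(X_{t});\tau_{R}\le t]$ truly coincide with their bounded-domain counterparts), and extracting from \eqref{eq:gauss} the exit estimate uniformly in $x$ near $x_{0}$; by contrast the strong Feller property on bounded Lipschitz domains and the regularity of $(\mathcal{E},H^{1}(D))$ enter as black boxes. One must also take a little care, when passing from the $L^{2}$-theory to a version of $X$ defined at every point, that the strong Feller conclusion is stated for that version.
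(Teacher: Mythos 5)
First, a point of comparison: the paper does not prove Theorem~\ref{thm:str} at all — it is quoted from \cite[Theorem~6.10]{Ma} — so your sketch has to stand on its own, and it has a genuine circularity. You invoke Lemma~\ref{lem:ind} to identify $E_{x}[f(X_{t});t<\tau_{R}]$ and $P_{x}(\tau_{R}\le t)$ with their bounded-domain counterparts \emph{for every} $x$, but in this paper Lemma~\ref{lem:ind} is proved \emph{from} Theorem~\ref{thm:str}: the coincidence of the Dirichlet forms only gives $p_{t}^{n}f=q_{t}^{n+1,n}f$ $m$-a.e., and the upgrade to every point goes through the absolute continuity \eqref{ac}, \eqref{ac2}, which is itself deduced from the strong Feller property you are trying to establish. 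For the Hunt process furnished by general regular-Dirichlet-form theory, all of your identifications are a priori available only for $m$-a.e.\ (q.e.)\ starting point; upgrading them to every $x$, and simultaneously arranging that $p_{t}f$, defined pointwise through $P_{x}$, coincides everywhere with the continuous version you construct, is precisely the substance of \cite[Theorem~6.10]{Ma} (where the analogue of Lemma~\ref{lem:ind} is obtained via Sobolev extension domains exactly because the route through \eqref{ac2} is not yet open). Your sentence ``a preliminary run of the construction \dots absolute continuity then lets one realize $X$ \dots'' assumes this step rather than performing it.

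The second gap is the exit estimate. The constants in \eqref{eq:sobolev} and \eqref{eq:gauss} for $X^{\Omega_{R}}$ are those of the bounded Lipschitz domain $\Omega_{R}=D\cap U_{R}$ and hence depend on $R$; for the merely locally Lipschitz, possibly horn-shaped domains this paper is written for (e.g.\ $D_{H}$ with rapidly decreasing $H$) nothing controls them as $R\to\infty$ — this is exactly why the paper's own bounds (Corollary~\ref{cor:exit1}, Theorem~\ref{thm:thm1}) carry $R$-dependent constants $c_{R},\gamma_{R}$ and why \eqref{eq:gauss} fails globally. So your claim $P^{\Omega_{R}}_{x}(\sigma_{R}\le t)\le C(t)\exp(-c(t)R^{2})$ with $R$-independent $C(t),c(t)$ does not follow from what you cite, and the concluding limit $R\to\infty$ collapses. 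What is true, and suffices, is qualitative: once the part process on $G_{R}$ is known to be strong Feller (via \cite{BH} and \cite[Theorem~1]{CK}), $x\mapsto P_{x}(\tau_{R}\le t)=1-p_{t}^{G_{R}}\bone(x)$ is continuous, so $\limsup_{x\to x_{0}}P_{x}(\tau_{R}\le t)=P_{x_{0}}(\tau_{R}\le t)$, and this tends to $0$ as $R\to\infty$ by conservativeness and path continuity; this is the paper's Lemma~\ref{lem:localunif} (Dini) argument — but note it again presupposes the everywhere-pointwise identification discussed above, so it does not rescue the circularity.
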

\begin{remark}\label{fel}
If $p_{t}(C_{\infty}(\ob{D})) \subset C_{\infty}(\ob{D})$ for any $t>0$, $X$ is called a Feller process. If $D$ is a horn-shaped domain, $X$ can be uniformly ergodic and is not always a Feller process (\cite[Proposition~2.11]{BCM}). However, reflecting Brownian motions constructed in \cite{FT} have Feller property (\cite[Theorem~2.1~(ii)]{FT}). 
\end{remark}

By Theorem~\ref{thm:str} and \cite[Exercise~4.2.4]{FOT}, the transition kernel of $X$ is absolutely continuous with respect to the Lebesgue measure $m$:
\begin{equation}
p_{t}(x,dy)=p_{t}(x,y)\,dm(y)  \quad \text{for each $t>0$ and $x \in \ob{D}$}. \label{ac}
\end{equation}

Since $(\mathcal{E}, H^{1}(D))$ is a strongly local Dirichlet form, by \cite[Theorem~4.5.3]{FOT}, $X$ is a diffusion process on $\ob{D}$. Furthermore, $X$ is conservative by Takeda's test. See \cite[Exercise~5.7.1]{FOT} for the proof. Hence, it follows that for any $x \in \ob{D}$
\begin{align*}
&P_{x}(X_t \in \ob{D} \text{ for any }t \in [0,\infty) \text{ and } [0,\infty) \ni t \mapsto X_t \in \ob{D}\text{ is continuous})=1. 
\end{align*}

For each $R>0$ and $\eps \in (0,1)$, we define
\begin{align*}
\ob{D}_R&=\ob{D} \cap B(R), \\
\ob{D}_{\eps,R}&=\{x \in \ob{D} \mid \inf_{y \in \ob{D}\setminus B(R)}|x-y|>\eps R \}.
\end{align*}
We note that each $\ob{D}_R$ and $\ob{D}_{\eps,R}$ are open subsets of $\ob{D}$. For an open subset $U \subset \ob{D}$, we define $\tau_{U}=\inf \{t>0 \mid X_t \notin U\}$ with convention that $\inf  \emptyset=\infty$. 

We are ready to state our main results. 
\begin{theorem}\label{thm:thm1}
Let $R \in (0,\infty)$ and $\eps \in (0,1)$. 
\begin{enumerate}
\item There are positive constants $c_R$, $\gamma_R$ depending on $R$ such that
\begin{align*}
P_{x}(\tau_{\ob{D} \cap B(x,r)} \le t) \le c_R \exp (-\gamma_R r^2/t)
\end{align*}
for any $t \in (0,\infty)$ and $(x,r) \in \ob{D}_R \times (0,R)$ with $B(x,r) \subset B(R)$. 
\item
There is a constant $a_{R}>0$ depending on $R$ such that for $m$-a.e. $y \in \ob{D}_{\eps, R}$,
\begin{equation*}
p_{t}(x,y) \le 
\begin{cases}
c_{R,\eps}a_R e^{t} t^{-d/2} \exp(- \eps \gamma_R |x-y|^2/t ) & \text{ if } t<R^2\text{ and } x \in \ob{D}_{R}, \\
c_{R,\eps}a_R  e^{ (2t) \wg  R^2} \{ (2t) \wg  R^2 \}^{-d/2}  \exp(-\eps \gamma_R R^2/t ) & \text{ if } t<R^2\text{ and } x \notin \ob{D}_{R}, \\
c_{R,\eps}a_R  e^{R^2}R^{-d} & \text{ if } t \ge R^2 \text{ and } x \in \ob{D}
\end{cases}
\end{equation*}
for some $c_{R,\eps}>0$ depending on $c_R, \gamma_R$, and $ \eps$.
\item $p_{t}(x,y)$ has a version which is positive and continuous on $(0,\infty) \times \ob{D} \times \ob{D}$. 
\end{enumerate}
\end{theorem}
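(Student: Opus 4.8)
The plan is to reduce everything to known results on \emph{bounded} Lipschitz domains by localization, exploiting the Lipschitz-domain hypothesis that every compact subset of $\ob{D}$ sits inside a bounded open set $U$ with $D\cap U$ a bounded Lipschitz domain. The crucial tool, as the introduction advertises, is Lemma~\ref{lem:ind}: the part process $X^{\ob D\cap U}$ (the process $X$ killed upon leaving $\ob D\cap U$) coincides with the part process on $\ob D\cap U$ of the reflecting Brownian motion on $\ob{D\cap U}$. Fix $R$ and choose, via the hypothesis, a bounded open $U$ with $\ob D_{2R}\subset U$ and $D\cap U$ a bounded Lipschitz domain; write $Y$ for the RBM on $\ob{D\cap U}$ and $q_t(x,y)$ for its heat kernel, which exists and is positive and continuous on $(0,\infty)\times\ob{D\cap U}\times\ob{D\cap U}$ by \cite[Lemma~4.3]{BH}.

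For part (1), I would apply \cite[Theorem~1.1]{GK} (Grigor'yan--Kajino) to the bounded Lipschitz domain $D\cap U$: a bounded Lipschitz domain satisfies the volume doubling property and a scale-invariant Poincar\'e inequality on inner-metric balls, hence the RBM $Y$ satisfies sub-Gaussian (in fact Gaussian, since the walk dimension is $2$) heat kernel bounds, which in particular yield the exit-time estimate $P^Y_x(\tau^Y_{\ob{D\cap U}\cap B(x,r)}\le t)\le c\exp(-\gamma r^2/t)$ for $x$ in a compact inner-ball and $r$ small. Since for $x\in\ob D_R$ and $r<R$ with $B(x,r)\subset B(R)$ the ball $\ob D\cap B(x,r)$ lies in $\ob D\cap U$, the identification of part processes (Lemma~\ref{lem:ind}) gives $\tau_{\ob D\cap B(x,r)}=\tau^Y_{\ob{D\cap U}\cap B(x,r)}$ in law under $P_x$, and the bound transfers verbatim with constants $c_R,\gamma_R$ depending only on the bounded Lipschitz domain $D\cap U$, i.e.\ on $R$. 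This is the step I expect to be the main obstacle: one must check carefully that the inner-metric balls on $D\cap U$ appearing in the Grigor'yan--Kajino hypotheses are comparable, near the compact set $\ob D_R$, to Euclidean balls, and that the constants obtained are genuinely $R$-dependent only.

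For part (2), I would run the standard Davies-type argument turning the exit estimate of (1) into a pointwise upper bound. First, by the strong Feller property (Theorem~\ref{thm:str}) and \eqref{ac}, $p_t(x,y)$ is well-defined pointwise in $x$; for $y\in\ob D_{\eps,R}$ the point $y$ has inner distance $>\eps R$ to $\ob D\setminus B(R)$, so the heat kernel near $y$ up to time $\asymp R^2$ is controlled by the Dirichlet heat kernel on $\ob D\cap B(R)$, which in turn — again via Lemma~\ref{lem:ind} and \cite[Theorem~1.1]{GK} applied to $D\cap U$ — obeys a Gaussian upper bound $q^{\mathrm{Dir}}_t(x,y)\le C t^{-d/2}\exp(-c|x-y|^2/t)$. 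When $x\in\ob D_R$ one combines this local Gaussian bound with the exit estimate (1) (a standard chaining/iteration as in \cite[Theorem~6.10]{O} or \cite{GK}); when $x\notin\ob D_R$ one first uses the strong Markov property at the hitting time of $\partial B(R)$ together with the exit estimate to pick up the factor $\exp(-\eps\gamma_R R^2/t)$ and then the interior bound on the remaining time $(2t)\wedge R^2$; for $t\ge R^2$ one simply uses the conservativeness and the uniform-in-time boundedness of $q_t$ on the compact set. The three cases in the statement correspond exactly to these three regimes, and the constants $c_{R,\eps},a_R$ are produced along the way.

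For part (3), positivity and continuity follow by a final localization. Given $(t_0,x_0,y_0)\in(0,\infty)\times\ob D\times\ob D$, pick $R$ large enough that $x_0,y_0\in\ob D_{\eps,R}$ for some $\eps$; then by Lemma~\ref{lem:ind} the part process on $\ob D\cap U$ agrees with the part process of $Y$, so for small times the transition density of $X$ decomposes (via the strong Markov property at $\tau_{\ob D\cap U}$, as in \cite[Lemma~4.3]{BH}) as $q^{\mathrm{Dir}}_t(x,y)$ plus a term that is continuous because it is built from $q_t$ of the \emph{bounded} Lipschitz domain $Y$ (which is continuous by \cite{BH}) integrated against the exit distribution; an argument of Bass--Hsu type then gives joint continuity on a neighborhood of $(t_0,x_0,y_0)$, and since $(t_0,x_0,y_0)$ was arbitrary we get continuity on all of $(0,\infty)\times\ob D\times\ob D$. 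Positivity follows from irreducibility of the (conservative, connected-domain) diffusion together with the strict positivity of $q_t$ on bounded Lipschitz domains and the Chapman--Kolmogorov equation: $p_t(x,y)=\int p_{t/2}(x,z)p_{t/2}(z,y)\,dm(z)>0$ once one knows $p_s(x,\cdot)>0$ on an open set, which the local Gaussian lower bound from \cite{GK} provides near the diagonal.
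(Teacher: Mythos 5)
Your parts (1) and (2) are essentially the paper's route: localize to a bounded Lipschitz domain, identify part processes via Lemma~\ref{lem:ind}, and transfer the estimate. Two corrections there: the Gaussian upper bound for the RBM on the bounded piece is not obtained by checking doubling and Poincar\'e inequalities in \cite[Theorem~1.1]{GK} (that theorem is a localized upper-bound criterion, not a parabolic Harnack characterization); the paper simply takes the Euclidean-distance Gaussian bound of \cite[Theorem~3.1]{BH} (see also \cite{GS}), turns it into a one-step exit probability (Lemma~\ref{lem:exit1}), and invokes \cite[Theorem~7.2]{GK} to get the exponential exit-time estimate, so the inner-metric-versus-Euclidean issue you flag as ``the main obstacle'' never arises. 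For (2), rather than redoing a Davies/chaining argument by hand, the paper verifies the three hypotheses $\text{(DB)}_{\beta}$, $\text{(DU)}_{F}^{U,R}$, $\text{(P)}_{\beta}^{U,R}$ of \cite[Theorem~1.1]{GK} (with $F_t(x,y)=a_{N+1}e^{t}t^{-d/2}$, the off-diagonal factor being produced by the theorem itself); your sketch would have to reconstruct exactly that theorem, including the three regimes and the $(2t)\wedge R^2$ normalization, which is a substantial omission.

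The genuine gap is in part (3). In your Dynkin--Hunt decomposition the remainder term is $E_{x}\bigl[p_{t-\tau_{U}}(X_{\tau_{U}},y);\,\tau_{U}<t\bigr]$: after leaving $\ob{D}\cap U$ the process runs in all of $\ob{D}$, so this term is built from the \emph{unknown global} kernel $p$, not from the bounded-domain kernel $q$ of \cite{BH}; its continuity therefore cannot be inferred from Bass--Hsu, and your argument is circular as written. The paper circumvents this by never claiming continuity of the remainder: it proves the quantitative bound $0\le p_{t}^{n}(x,y)-p_{t}^{n'}(x,y)\le C_{\eps,R}\,P_{x}(\tau_{n'}\le t)$ for $y\in\ob{D}_{\eps,R}$, where $C_{\eps,R}$ comes from the sup bounds of part (2), and then uses Lemma~\ref{lem:localunif} (Dini's theorem) to get locally uniform convergence of the continuous kernels $p_{t}^{n}$, whose continuity and positivity are themselves established via the strong Feller property from Lemma~\ref{lem:ind} and an eigenfunction expansion (Lemmas~\ref{lem:contiversion} and \ref{lem:eigen}) --- another ingredient your sketch assumes without proof when it treats the killed kernel $q^{\mathrm{Dir}}$ as continuous. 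Finally, your positivity argument invokes a ``local Gaussian lower bound from \cite{GK}'', but that paper contains only upper bounds; the paper's positivity instead comes from positivity of the principal eigenfunction, a chaining argument along a path in $K_n$, and analytic continuation in $t$ (the argument of \cite[Theorem~A.4]{Ki}), after which $p_t\ge p_t^n>0$.
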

We denote by $\ob{D}_{\partial}=\ob{D} \cup \{\partial\}$ the one-point compactification of $\ob{D}$. For an open subset  $U \subset \ob{D}$, we define $X^{U}=(\{X^{U}_t\}_{t \ge 0}, \{P_x\}_{x \in U})$ by 
\begin{equation*}
X^U_{t}=
\begin{cases}
 X_t & \text{ if }t<\tau_{U}, \\
 \partial & \text{ if } t \ge \tau_U.
\end{cases}
\end{equation*}
$X^U$ is called the {\it part process} of $X$ on $U$.
Clearly, the transition kernel $p_{t}^{U}(x,dy)$ of $X^{U}$ is absolutely continuous with respect to the Lebesgue measure $m$: 
\begin{equation}
p_{t}^{U}(x,dy)=p_{t}^{U}(x,y)\,dm(y) \quad \text{for each } t>0\text{ and } x\in U. \label{ac2}
\end{equation}

$p_{t}^{U}(x,y)$ also possesses a continuous version.
\begin{theorem}\label{cor:2}
For any non-empty open subset $U \subset \ob{D}$, $p_{t}^{U}(x,y)$ has a version which is continuous on $(0,\infty) \times U \times U$. If $U$ is connected, in addition, the version is positive.
\end{theorem}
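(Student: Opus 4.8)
The plan is to deduce Theorem~\ref{cor:2} from Theorem~\ref{thm:thm1}~(3) by a localization argument, exploiting that $X^U$ is itself a Hunt process associated with a regular Dirichlet form (the part form $(\mathcal{E}, H^1(D)_U)$ on $L^2(U,m)$, where $H^1(D)_U$ is the set of $u \in H^1(D)$ vanishing q.e.\ outside $U$). First I would record the probabilistic identity linking the two kernels: for $t>0$, $x \in U$, and a bounded Borel $f$ on $U$ (extended by $0$ to $\ob D$),
\begin{equation*}
p_t^U f(x) = p_t f(x) - E_x\!\left[ \tau_U < t ;\, p_{t-\tau_U} f (X_{\tau_U}) \right],
\end{equation*}
which is the strong Markov property applied at the exit time $\tau_U$ together with the fact that $X$ is a diffusion (so $X_{\tau_U} \in \partial U$). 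Equivalently, in kernel form,
\begin{equation*}
p_t^U(x,dy) = p_t(x,dy) - E_x\!\left[ \tau_U < t ;\, p_{t-\tau_U}(X_{\tau_U}, dy) \right].
\end{equation*}

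Next I would fix a point $x_0 \in U$ and a small ball so that $\ob{B(x_0,2\rho)} \cap \ob D \subset U$, and split the analysis on the time-space neighborhood of $(t_0, x_0, y_0)$ for a given $y_0 \in U$. The singular diagonal behavior of $p_t^U$ is entirely carried by the free kernel $p_t$, which is continuous and positive on $(0,\infty)\times \ob D \times \ob D$ by Theorem~\ref{thm:thm1}~(3). So it remains to show the correction term $q_t^U(x,dy) := E_x[\tau_U < t ;\, p_{t-\tau_U}(X_{\tau_U},dy)]$ has a density $q_t^U(x,y)$ jointly continuous in $(t,x,y) \in (0,\infty)\times U \times U$. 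Here the key observation is that for $x$ ranging in a compact subset $K_1$ of $U$ and $y$ in a compact subset $K_2$ of $U$, on the event $\{\tau_U < t\}$ the point $X_{\tau_U}$ lies in $\ob D \setminus U$, hence at positive distance $\delta := \mathrm{dist}(K_1 \cup K_2, \ob D\setminus U) > 0$ from both $K_1$ and $K_2$; and $t - \tau_U \in (0,t]$. On this region $p_s(z, y)$ with $|z-y| \ge \delta$ is bounded above, by Theorem~\ref{thm:thm1}~(2) applied with a fixed large $R$ (so that $K_1,K_2,\ob D\setminus U$-relevant sets sit inside $B(R)$, making the exponential Gaussian factor integrable and uniformly controlled), and $p_t^U(x,y) = p_t(x,y) - q_t^U(x,y)$ forces $q_t^U(x,y) \le p_t(x,y)$, so $q_t^U$ is locally bounded. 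Continuity of $q_t^U$ in $(t,x,y)$ then follows from: (i) continuity in $y$ of $z \mapsto p_s(z,y)$ uniformly for $s$ bounded away from $0$ and $|z-y|\ge \delta$ (Theorem~\ref{thm:thm1}~(3) plus the estimate in (2) to get uniformity and dominated convergence); (ii) joint continuity in $(t,x)$ of the law of $(\tau_U, X_{\tau_U})$ restricted to $\{\tau_U < t\}$ against bounded continuous test functions — for the $x$-dependence this is exactly the strong Feller / Hunt-process regularity already established for $X$, combined with the fact that $P_x(\tau_U = t) = 0$ for each fixed $t$ (absolute continuity, no sojourn of a diffusion on the boundary at a deterministic time); and a dominated-convergence argument to pass these limits through the expectation using the local bound from step above.

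An alternative, and perhaps cleaner, route avoids the explicit exit-time decomposition: since $(\mathcal{E}, H^1(D))$ is regular and strongly local, its part on any open $U\subset \ob D$ is again a regular Dirichlet form, and the part process $X^U$ on a small coordinate chart $V = U \cap B(x_0, 2\rho)$ with $\ob V \cap \ob D \subset W$ for a \emph{bounded} Lipschitz piece $W$ agrees, up to its exit time from $V$, with the RBM on that bounded Lipschitz domain; this is precisely the philosophy of Lemma~\ref{lem:ind} cited in the introduction. For bounded Lipschitz domains the joint continuity and positivity of the (Dirichlet-at-part, Neumann-at-original-boundary mixed) heat kernel is classical (Bass–Hsu style parabolic boundary Harnack / Nash–Moser estimates, as in \cite{BH}), and Grigor'yan–Kajino type local two-sided bounds give positivity. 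Glancing between these local pictures via the decomposition identity and a partition-of-unity/overlap argument yields continuity on all of $(0,\infty)\times U\times U$. Finally, positivity when $U$ is connected: $p_t^U(x,y) > 0$ follows from irreducibility of $X^U$ on the connected set $U$ — use that a continuous, symmetric, sub-Markovian semigroup on a connected state space with the strong Feller property and $p_t^U(x,dy)\gg 0$ locally (from the local two-sided bounds) has strictly positive density everywhere, by a standard chaining argument: for $x,y\in U$ pick a path, cover it by finitely many small balls on each of which the local lower bound holds, and use $p_t^U(x,y) \ge \int p_{t/n}^U(x,z_1)\cdots p_{t/n}^U(z_{n-1},y)\,dm(z_1)\cdots dm(z_{n-1}) > 0$.

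The main obstacle I anticipate is step (ii) above: establishing that the joint law of $(\tau_U, X_{\tau_U})$ on $\{\tau_U<t\}$ depends continuously on the starting point $x$ in a way strong enough to push limits through the expectation. The strong Feller property of $X$ gives continuity of $x\mapsto E_x[h(X_{t})]$ for bounded Borel $h$ and fixed $t$, but here the time is the \emph{random} $\tau_U$; one needs either a quasi-left-continuity plus absence-of-atoms argument, or to recast the correction term using the resolvent/Green-operator of $X$ killed on $\ob D\setminus U$ (i.e.\ $q_t^U$ as a convolution $\int_0^t$ of the killed transition density against a harmonic-measure-type kernel) and then invoke continuity of that Green operator — which again reduces to the local bounds in Theorem~\ref{thm:thm1}~(2). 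I would handle it by the alternative route in the previous paragraph, where the continuity question is localized to bounded Lipschitz domains and the needed regularity is already in the literature; the price is a slightly more bookkeeping-heavy gluing argument.
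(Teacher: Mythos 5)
You correctly identified the central obstacle, but neither of your two routes actually resolves it. In route one, the continuity in $x$ of the correction term $E_x[\tau_U<t;\,p_{t-\tau_U}(X_{\tau_U},y)]$ cannot be extracted from the strong Feller property of $X$ alone, because the time entering the kernel is the random time $\tau_U$; the standard tool for part processes, \cite{CK}*{Theorem~1}, requires the \emph{doubly} Feller property, and the paper stresses (Remark~\ref{fel} and the remark after Lemma~\ref{lem:ind}) that on domains such as $D_H$ the process $X$ need not be Feller, so you cannot invoke it for $X$ and an arbitrary, possibly unbounded, $U$. Your route two is in the spirit of the paper's Lemma~\ref{lem:ind} and does work for bounded $U$: the part of $X$ on $U\cap K_n$ is also the part of the doubly Feller RBM $Y^{n+1}$ on the compact set $J_{n+1}$, so \cite{CK} gives the strong Feller property and an eigenfunction expansion gives a continuous, strictly positive kernel — this is exactly the paper's Lemma~\ref{lem:localconti}. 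But the ``partition-of-unity/overlap gluing'' that is supposed to upgrade these local pictures to continuity on $(0,\infty)\times U\times U$ for unbounded $U$ is precisely where the unresolved issue reappears: reconstructing $p_t^U$ from the local kernels again requires a Dynkin--Hunt-type correction term whose dependence on the starting point is the very thing you could not control, so the proposal has a genuine gap at this step.

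What is missing is the paper's quantitative exhaustion argument, which replaces any continuity statement about the law of $(\tau_U,X_{\tau_U})$ by a uniform smallness estimate. The paper exhausts $U$ by $U\cap K_n$, takes the continuous positive kernels $p_t^{U,n}(x,y)$ from Lemma~\ref{lem:localconti}, and proves the pointwise comparison $0\le p_t^{U,n}(x,y)-p_t^{U,n'}(x,y)\le p_t^{n}(x,y)-p_t^{n'}(x,y)$ for $n>n'$. By the argument already used for Theorem~\ref{thm:thm1}~(iii), the right-hand side is bounded by $C_{\eps,R}\,P_x(\tau_{n'}\le t)$ uniformly for $y$ in a fixed compact set $\ob{D}_{\eps,R}$, because on $\{\tau_{n'}<t\}$ the process restarts from a point at definite distance from $\ob{D}_{\eps,R}$, where the Gaussian-type bound of Theorem~\ref{thm:thm1}~(ii) applies; Lemma~\ref{lem:localunif} (Dini's theorem) then makes $P_x(\tau_{n'}\le t)\to 0$ locally uniformly in $(t,x)$, so $p_t^{U,n}$ converges locally uniformly and the limit is the desired continuous version of $p_t^{U}$. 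If you add this exhaustion-plus-uniform-estimate step, your plan becomes essentially the paper's proof; your chaining argument for positivity on connected $U$ is fine and agrees with the paper's treatment.
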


\section{Preliminaries}
Since $D$ is a Lipschitz domain, there exist increasing bounded open subsets $\{U_n\}_{n=1}^{\infty}$ of $\R^d$ such that for each $U_n \cap D$ is a bounded Lipschitz domain of $\R^d$ and $\ob{D}=\bigcup_{n=1}^{\infty} U_n \cap \ob{D}$. For each $n \in \N$, we set 
\begin{equation*}
I_n:=D \cap U_n,\quad J_n:=\ob{I_n},\quad K_n:=\ob{D} \cap U_n.
\end{equation*}
Here, $\ob{I_n}$ is the closure of $I_n$ in $\R^d$. $K_n$ is an open subset of $\ob{D}$. For each $n \in \N$, we define $\tau_{n}=\tau_{U_n}.$ We denote by $X^{n}=(\{X_t^{n}\}_{t \ge 0}, \{P_x\}_{x \in K_n})$ the part process of $X$ on $K_n$. The semigroup is denoted by $\{p_t^n\}_{t>0}$. We set 
$$\mathcal{C}_{K_n}=\{f \in H^{1}(D) \cap C_{c}(\ob{D}) \mid \text{supp}[f] \subset K_n\}.$$
By \cite[Lemma~2.3.4~(ii)]{FOT}, the Dirichlet form $(\mathcal{E}^n,\mathcal{F}^n)$ of $X^n$ is regular on $L^{2}(K_n,m)$. It also holds that
\begin{align*}
\mathcal{F}^n&=\text{the completion of }\mathcal{C}_{K_n}\text{ with respect to }\|\cdot\|_{H^{1}(D)},\\
 \mathcal{E}^n&=\mathcal{E}|_{\mathcal{F}^n \times \mathcal{F}^n}.
\end{align*}

 Since each $I_n$ is a bounded Lipschitz domain, by \cite[Theorem~3.1]{BH}, there exists a reflecting Brownian motion $Y^n=(\{Y_t^n\}_{t \ge 0}, \{Q_{x}^{n}\}_{x \in J_n})$ on $J_n$ with the following properties (see also \cite[Theorem~3.10]{GS}).
\begin{itemize}
\item The Dirichlet form $(\mathcal{A}^n,\mathcal{B}^n)$ of $Y^n$ is identified with
\begin{align*}
\mathcal{B}^n=H^{1}(I_n),\quad \mathcal{A}^n (f,g)=\frac{1}{2}\sum_{i=1}^{d}\int_{I_n}\partial_ i f \partial_i g\,dm,\quad f,g \in \mathcal{B}^n.
\end{align*}
\item The semigroup $\{q_t^n\}_{t>0}$ of $Y^n$ satisfies the following: for any $t>0$ and any $f \in \mathcal{B}_{b}(J_n)$, $q_{t}^nf$ is a bounded continuous function on $J_n$.
\item The transition kernel $q_{t}^{n}(x,dy)$ of $Y^n$ is absolutely continuous with respect to $m$ and the density $q_{t}^{n}(x,y)$ is continuous on $(0,\infty) \times J_n \times J_n$. There exist constants $a_n, b_n \in (0,\infty)$ such that
\begin{equation}
q_{t}^{n}(x,y) \le a_n e^{t}t^{-d/2}\exp \left(-|x-y|^2/b_n t \right) \label{nhke}
\end{equation}
for any $t \in (0,\infty)$ and $x,y \in J_n$.
\end{itemize}
For each $n \in \N$, $K_n$ is also an open subset of $J_{n+1}$. We denote $Y^{n+1,n}$ by the part process of $Y^{n+1}$ on $K_n$. It follows from \cite[Theorem~1]{CK} that the semigroup of $Y^{n+1,n}$ is strong Feller: for any $f \in \mathcal{B}_{b}(K_n)$ and $t>0$, $q_{t}^{n+1,n}f$ is bounded continuous on $K_n$. The Dirichlet form  $(\mathcal{A}^{n+1},\mathcal{B}^{n+1})$ is regular on $L^{2}(J_{n+1},m)$. Hence, by \cite[Lemma~2.3.4~(ii)]{FOT},  the Dirichlet form $(\mathcal{A}^{n+1,n},\mathcal{B}^{n+1,n})$ of $Y^{n+1,n}$ is regular on $L^{2}(K_n,m)$. It also holds that 
\begin{align*}
\mathcal{B}^{n+1,n}&=\text{the completion of }\mathcal{C}'_{K_n}\text{ with respect to }\|\cdot\|_{H^{1}(I_{n+1})},\\
 \mathcal{A}^{n+1,n}&=\mathcal{A}^{n+1}|_{\mathcal{B}^{n+1,n} \times \mathcal{B}^{n+1,n}},
\end{align*}
where $\mathcal{C}'_{K_n}=\{f \in H^{1}(I_{n+1}) \cap C_{c}(J_{n+1}) \mid \text{supp}[f] \subset K_n\}$.

There is an indirect relation between $Y^n$ and $X$. Identifying the Dirichlet forms of $Y^{n+1,n}$ and $X^n$, we obtain the following lemma.
\begin{lemma}\label{lem:ind}
It holds that
$$p_{t}^{n}f(x)=q_{t}^{n+1,n}f(x),\quad x \in K_n$$
for any $n \in \N$, $t>0$, and $f \in \mathcal{B}_{b}(K_n)$.
In particular, $X^n$ is strong Feller: $p_t^{n}f$ is continuous on $K_n$.
\end{lemma}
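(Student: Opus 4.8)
The strategy is to identify the Dirichlet forms $(\mathcal{E}^n, \mathcal{F}^n)$ of $X^n$ and $(\mathcal{A}^{n+1,n}, \mathcal{B}^{n+1,n})$ of $Y^{n+1,n}$, both of which are regular Dirichlet forms on $L^2(K_n, m)$, and then invoke the uniqueness of the Hunt process (up to equivalence) associated with a regular Dirichlet form. Once the two semigroups agree on $L^2(K_n, m)$, the strong Feller property of $q_t^{n+1,n}$ established above transfers to $p_t^n$, and the identity $p_t^n f(x) = q_t^{n+1,n} f(x)$ for $f \in \mathcal{B}_b(K_n)$ follows by a standard approximation argument (both sides being continuous on $K_n$ and agreeing $m$-a.e.).

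First I would check that the two forms coincide as bilinear forms. The form of $X^n$ is the completion of $\mathcal{C}_{K_n} = \{f \in H^1(D) \cap C_c(\ob{D}) : \mathrm{supp}[f] \subset K_n\}$ under $\|\cdot\|_{H^1(D)}$, with $\mathcal{E}^n = \mathcal{E}|_{\mathcal{F}^n \times \mathcal{F}^n}$; the form of $Y^{n+1,n}$ is the completion of $\mathcal{C}'_{K_n} = \{f \in H^1(I_{n+1}) \cap C_c(J_{n+1}) : \mathrm{supp}[f] \subset K_n\}$ under $\|\cdot\|_{H^1(I_{n+1})}$, with $\mathcal{A}^{n+1,n} = \mathcal{A}^{n+1}|_{\cdot}$. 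For a function $f$ supported in $K_n = \ob{D} \cap U_n$, its behavior is entirely inside $I_{n+1} = D \cap U_{n+1} \supset I_n$, and since $f$ vanishes near $\partial U_n$, the two Sobolev norms $\|f\|_{H^1(D)}$ and $\|f\|_{H^1(I_{n+1})}$ coincide (the integrals are over the same effective region $I_{n+1}$, as $f \equiv 0$ on $D \setminus K_n \supset D \setminus I_{n+1}$), and likewise the energy integrals $\mathcal{E}(f,f)$ and $\mathcal{A}^{n+1}(f,f)$ are literally the same expression. So it remains to see that the two core spaces $\mathcal{C}_{K_n}$ and $\mathcal{C}'_{K_n}$ have the same closure. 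This is where the \emph{spectral synthesis} comes in: both completions should equal the space $H^1_{K_n} := \{f \in H^1(I_{n+1}) : f = 0 \text{ q.e. on } J_{n+1} \setminus K_n\}$ (equivalently the analogous space inside $H^1(D)$), because $K_n$ is open and the $1$-capacities relative to the two ambient regular Dirichlet forms agree on subsets of $K_n$. One applies the spectral synthesis theorem (as in \cite[Theorem~2.3.3 or the part-process description in 4.4]{FOT}, or Remark after \cite[Lemma~2.3.4]{FOT}) to conclude that $\mathcal{C}_{K_n}$ and $\mathcal{C}'_{K_n}$ are both dense in $H^1_{K_n}$ under the common norm, hence have the same closure. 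Thus $(\mathcal{E}^n, \mathcal{F}^n) = (\mathcal{A}^{n+1,n}, \mathcal{B}^{n+1,n})$.

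Next, since a regular Dirichlet form determines its associated $m$-symmetric Hunt process up to an $\mathcal{E}$-exceptional set (and hence determines the semigroup acting on $L^2(K_n,m)$ uniquely), we get $p_t^n = q_t^{n+1,n}$ as operators on $L^2(K_n, m)$, i.e. $p_t^n f = q_t^{n+1,n} f$ $m$-a.e. for $f \in L^2 \cap \mathcal{B}_b(K_n)$. To upgrade this to the pointwise identity on all of $K_n$ for every $f \in \mathcal{B}_b(K_n)$: the right side $q_t^{n+1,n} f$ is continuous on $K_n$ by the strong Feller property cited from \cite[Theorem~1]{CK}. For the left side, write it via the semigroup property as $p_t^n f = p_{t/2}^n(p_{t/2}^n f)$; since $p_{t/2}^n f = q_{t/2}^{n+1,n} f$ $m$-a.e.\ and is bounded, and using the absolute continuity \eqref{ac2} together with the continuity of the kernel $q^{n+1,n}_{t/2}(x,y)$ (a consequence of its own strong Feller property and symmetry, via the standard $q_t = q_{t/2} q_{t/2}$ smoothing argument), one sees that $p_t^n f(x) = \int_{K_n} q_{t/2}^{n+1,n}(x, y)\, q_{t/2}^{n+1,n}f(y)\, dm(y)$ is a continuous function of $x \in K_n$ agreeing $m$-a.e.\ with the continuous function $q_t^{n+1,n} f$, hence equal to it everywhere. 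This yields the claimed identity and the strong Feller property of $X^n$.

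The main obstacle is the core-identification step: showing that the two a priori different completions — one taken inside the ambient space $H^1(D)$ for the domain $D$, the other inside $H^1(I_{n+1})$ for the bounded Lipschitz domain $I_{n+1}$ — actually coincide. The subtlety is that capacity is not a purely local notion, so one must argue that for the specific open set $K_n$, which is "comfortably interior" to $U_{n+1}$, the relevant $\mathcal{E}$-capacities and the quasi-continuous representatives match, and that the spectral synthesis characterization of the part-process domain applies identically on both sides. Everything else — the equality of the bilinear expressions, the uniqueness of the process, and the bootstrap from $L^2$ to pointwise via strong Feller and absolute continuity — is routine.
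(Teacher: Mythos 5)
Your overall route coincides with the paper's: identify the two regular Dirichlet forms on $L^{2}(K_n,m)$, deduce $p_t^{n}f=q_t^{n+1,n}f$ $m$-a.e., and then upgrade to a pointwise identity using the absolute continuity \eqref{ac2} and the continuity of $q_t^{n+1,n}f$. The form-identification step is fine and is essentially what the paper does (the quoted characterization of the part-form domains as the completions of $\mathcal{C}_{K_n}$ and $\mathcal{C}'_{K_n}$ already settles it once one notes these cores coincide with equivalent norms; your extra discussion of capacities and quasi-continuous versions is not needed). The problem is in your upgrade step.

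From \eqref{ac2} and the $m$-a.e. identity you correctly obtain, for \emph{every} $x\in K_n$, $p_t^{n}f(x)=p_{t/2}^{n}\bigl(q_{t/2}^{n+1,n}f\bigr)(x)$. But your next move, replacing the kernel $p_{t/2}^{n}(x,y)$ by $q_{t/2}^{n+1,n}(x,y)$ so as to write $p_t^{n}f(x)=\int_{K_n}q_{t/2}^{n+1,n}(x,y)\,q_{t/2}^{n+1,n}f(y)\,dm(y)$ for every $x$, is precisely the statement being proved (at time $t/2$): at this stage the two transition kernels are known to agree only for $m$-a.e.\ $x$. Hence you have only re-derived the $m$-a.e.\ identity, and the closing argument (``a continuous function agreeing $m$-a.e.\ with the continuous function $q_t^{n+1,n}f$, hence equal everywhere'') cannot be applied to $p_t^{n}f$, whose continuity is exactly what is to be shown. (You also invoke joint continuity of $q_{t/2}^{n+1,n}(x,y)$, which is not among the facts quoted for the part process -- only the strong Feller property from \cite{CK} is -- though it could be established as in Lemma~\ref{lem:eigen}; this is a secondary, repairable omission.) The fix is short and is what the paper does: \eqref{ac2} alone gives, for every $x\in K_n$ and every $\eps>0$, $p_{t+\eps}^{n}f(x)=p_{\eps}^{n}(p_t^{n}f)(x)=p_{\eps}^{n}(q_t^{n+1,n}f)(x)$, and letting $\eps\to0$ -- using that $q_t^{n+1,n}f$ is bounded continuous and that $X$ starts at $x$ with continuous paths, so that no kernel identification at a fixed positive time is needed -- yields $p_t^{n}f(x)=q_t^{n+1,n}f(x)$ for continuous bounded $f$; alternatively you could iterate your own halving identity to $p_t^{n}f(x)=p_{t/2^{k}}^{n}\bigl(q_{t-t/2^{k}}^{n+1,n}f\bigr)(x)$ and let $k\to\infty$. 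A monotone class argument then extends the identity to all $f\in\mathcal{B}_{b}(K_n)$, and the strong Feller property of $X^{n}$ follows.
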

\begin{proof}
The Dirichlet forms of $X^{n}$ and $Y^{n+1,n}$ conincide. Indeed, $\mathcal{C}_{K_n}$ and $\mathcal{C}'_{K_n}$ coincide as the subspace of $L^{2}(K_n,m)$, and the norms $\|\cdot\|_{H^{1}(D)}$ and $\|\cdot\|_{H^{1}(I_{n+1})}$ are equivalent on $\mathcal{C}_{K_n}$.  Since the Dirichlet forms coincide, it holds that $p_{t}^{n}f=q_{t}^{n+1,n}f$, $m$-a.e.  for any $t>0$ and $f \in \mathcal{C}_{b}(K_n)$.  It follows from \eqref{ac2} that for any $\eps>0$ and $x \in K_n$, 
\begin{equation}
p_{t+\eps}^{n}f(x)=p_{\eps}^{n}(p_t^{n}f)(x)=p_{\eps}^{n}(q_{t}^{n+1,n}f)(x).\label{eq:eqapp}
\end{equation}
$q_{t}^{n+1,n}f$ is continuous on $K_n$. Therefore, by letting $\eps \to 0$ in \eqref{eq:eqapp}, we have $p_{t}^{n}f(x)=q_{t}^{n+1,n}f(x)$ for any $x \in K_n$. A monotone class argument completes the proof.
\end{proof}

\begin{remark}
\begin{itemize}
\item[(i)] The proof of Lemma~\ref{lem:ind} is much simpler than that of \cite[Lemma~6.2]{Ma}, where the author uses the theory of extension domains.
\item[(ii)]
If $X$ is a Feller process, we can apply \cite[Theorem~1]{CK} to $X$ and obtain the strong Feller property of $X^n$. 
\end{itemize}
\end{remark}

It follows from  Lemma~\ref{lem:ind} and \eqref{nhke} that each $p_t^n$ is a bounded operator from $L^{1}(K_n,m)$ to $L^{\infty}(K_n,m)$. In particular, each $p_t^n$ becomes a compact operator on $L^{2}(K_n,m)$. Therefore, the (non-positive) generator $\mathcal{L}^n$ of $\{p_t^n\}_{t>0}$ has no essential spectrum.
\begin{lemma}\label{lem:contiversion}
For each $n \in \N$, the eigenfunctions of $-\mathcal{L}^n$ has a bounded continuous version on $K_n$. The principal eigenfunction can be taken to be positive on $K_n$.
\end{lemma}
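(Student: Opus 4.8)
The plan is to combine two facts already available: (i) by Lemma~\ref{lem:ind} and the Gaussian bound \eqref{nhke}, each $p_t^n$ maps $L^1(K_n,m)$ boundedly into $L^\infty(K_n,m)$ and in particular is bounded from $L^2(K_n,m)$ to $L^\infty(K_n,m)$; and (ii) the strong Feller property $p_t^n(\mathcal{B}_b(K_n)) \subset C_b(K_n)$ from the same lemma. Let $\phi \in L^2(K_n,m)$ be an eigenfunction of $-\mathcal{L}^n$ with eigenvalue $\lambda \ge 0$, so $p_t^n \phi = e^{-\lambda t}\phi$ $m$-a.e.\ for every $t>0$. First I would observe that $\phi \in L^\infty(K_n,m)$: indeed $\phi = e^{\lambda t} p_t^n \phi$ and $p_t^n$ is bounded from $L^2$ to $L^\infty$, so $\|\phi\|_{K_n,\infty} \le e^{\lambda t}\|p_t^n\|_{L^2 \to L^\infty}\|\phi\|_{L^2(K_n,m)} < \infty$. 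Hence $\phi$ has a bounded representative, and we may feed it back through the semigroup.

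Next, define $\tilde\phi := e^{\lambda}\, p_1^n \phi$, which is a genuine (everywhere-defined) function on $K_n$. Since $\phi \in \mathcal{B}_b(K_n)$ (using the bounded representative), the strong Feller property gives $\tilde\phi = e^\lambda p_1^n\phi \in C_b(K_n)$. On the other hand $p_1^n\phi = e^{-\lambda}\phi$ $m$-a.e., so $\tilde\phi = \phi$ $m$-a.e. Thus $\tilde\phi$ is a bounded continuous version of the eigenfunction on $K_n$. This handles the first assertion.

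For the principal eigenfunction: let $\lambda_1 = \inf \mathrm{spec}(-\mathcal{L}^n)$ be the bottom of the spectrum, which is an eigenvalue because $p_t^n$ is compact on $L^2(K_n,m)$ (so $-\mathcal{L}^n$ has discrete spectrum with no essential part, as noted before the lemma). The semigroup $\{p_t^n\}$ is positivity preserving, and since $K_n = \ob{D}\cap U_n$ is an open subset of $\ob{D}$ with $\ob D$ connected, I would argue the associated Dirichlet form $(\mathcal{E}^n,\mathcal{F}^n)$ is irreducible: any invariant set would, via the strong locality and connectedness of $K_n$ (or of its connected components — here one should note $K_n$ need not be connected, and the ``principal eigenfunction'' statement is really per connected component, or $K_n$ should be assumed connected), have $m$-measure $0$ or full measure. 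Granting irreducibility, a standard Perron--Frobenius argument for positivity-preserving semigroups (e.g.\ \cite[Theorem~XIII.44]{RS} or the Dirichlet-form version) shows the principal eigenvalue $\lambda_1$ is simple and its eigenfunction $\phi_1$ can be chosen $\ge 0$ $m$-a.e. By the continuity already established, the continuous version $\tilde\phi_1$ is then $\ge 0$ everywhere on $K_n$; and strict positivity follows from $\tilde\phi_1 = e^{\lambda_1}p_1^n\phi_1$ together with the strict positivity of the kernel $p_1^n(x,y)$ (which holds on each connected component — this can be obtained as in Theorem~\ref{thm:thm1}(3) applied to the bounded Lipschitz pieces, or directly from the Gaussian lower bounds for $q^{n+1,n}$ on connected components and Lemma~\ref{lem:ind}).

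The main obstacle I anticipate is the positivity/irreducibility part rather than the continuity part: one must be careful that $K_n$ may fail to be connected, in which case the statement should be read componentwise, and one must either invoke a clean Perron--Frobenius theorem for irreducible positivity-preserving semigroups or derive irreducibility directly from strong locality plus connectedness of the component. The continuity half, by contrast, is a soft bootstrap: $L^2$ eigenfunction $\Rightarrow$ $L^\infty$ (by the $L^2\to L^\infty$ smoothing from \eqref{nhke} and Lemma~\ref{lem:ind}) $\Rightarrow$ continuous (by strong Feller), with no delicate estimates required.
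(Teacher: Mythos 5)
Your continuity half is correct and is essentially the paper's own argument: from $\varphi=e^{\lambda t}p_t^n\varphi$, the $L^1\to L^\infty$ (hence $L^2\to L^\infty$) bound coming from \eqref{nhke} and Lemma~\ref{lem:ind} gives boundedness, and the strong Feller property of $p_1^n$ then produces a bounded continuous version. Also, your hedge about connectedness is unnecessary: $I_n\subset K_n\subset \ob{I_n}$ and $I_n$ is a (connected) bounded Lipschitz domain, so $K_n$ is connected and the Dirichlet form of $X^n$ is irreducible; no componentwise reading is needed.

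The genuine gap is in the strict positivity step. You deduce $\tilde\varphi_1>0$ on $K_n$ from strict positivity of the kernel $p_1^n(x,y)$, but neither justification you offer for that positivity is available. First, $p_1^n=q_1^{n+1,n}$ is the kernel of the \emph{part} (killed) process on $K_n$, not of a reflecting Brownian motion on a bounded Lipschitz closure: it degenerates as the space variables approach the killed portion of the boundary ($\ob{D}\cap\partial U_n$), so there is no Gaussian \emph{lower} bound for $q^{n+1,n}$ on $K_n\times K_n$; the two-sided bounds behind \eqref{nhke} concern $q^{n+1}$ on $J_{n+1}$, and lower bounds do not transfer to part processes. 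Second, invoking Theorem~\ref{thm:thm1}~(3) (or, what amounts to the same, the positivity of $p_t^n(x,y)$) is circular in the paper's logical order: that positivity is proved only in Lemma~\ref{lem:eigen}, whose proof \emph{starts} from the everywhere-positivity of $\varphi_1$, i.e.\ from the statement you are proving. What is needed is a positivity-improving statement for $\{p_t^n\}_{t>0}$ that does not presuppose kernel positivity; the paper gets it by combining irreducibility (connectedness of $K_n$), the tightness property implied by the $L^1\to L^\infty$ bound, and the strong Feller property, and then applying \cite[Lemma~6.4.5]{FOT} to conclude $\varphi_1>0$ on $K_n$. Alternatively, you could first prove interior positivity of the killed kernel directly (for instance, $p_{2t}^n(x,x)=\int_{K_n}p_t^n(x,z)^2\,dm(z)>0$ because $P_x(\tau_{K_n}>t)>0$ for $x\in K_n$, followed by a chaining argument as in Lemma~\ref{lem:eigen}), but this must be carried out independently of Lemma~\ref{lem:eigen}; as written, your positivity argument does not close.
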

\begin{proof}
We denote by $\{\lambda_k\}_{k=1}^{\infty} \subset [0,\infty)$ the eigenvalues of $-\mathcal{L}^n$. Then, the eigenfunctions $\{\varphi_k\}_{n=1}^{\infty}$ of $-\mathcal{L}^n$ satisfy $-\mathcal{L}^n \varphi_k=\lambda_k \varphi_k$ for each $k \in \N$. It is easy to see that $\varphi_{k}=e^{-\lambda_{k}}p_{1}^n\varphi_{k}$ and it follows that each $\varphi_{k}$ has a bounded continuous version by Lemma~\ref{lem:ind} and the ultracontractivity of $\{p_t^n\}_{t>0}$. Since $K_n$ is connected, $X^n$ is irreducible in the sense of \cite[Section~1]{FOT}. Since $\{p_t^n\}_{t>0}$ is a bounded operator from $L^{1}(K_n,m)$ to $L^{\infty}(K_n,m)$, $X^n$ possesses a tightness property in the sense of \cite[Section~6.4]{FOT}. By the strong Feller property of $X^n$ and \cite[Lemma~6.4.5]{FOT}, $\varphi_1$ can be taken to be positive on $K_n$.
\end{proof}
The heat kernel of $X^n$ is positive and continuous.
\begin{lemma}\label{lem:eigen}
For any $n \in \N$, there exists a continuous function $p_{t}^{n}(x,y):(0,\infty) \times K_n \times K_n \to (0,\infty)$ such that $$p_{t}^{n}f(x)=\int_{K_n}p_{t}^{n}(x,y)f(y)\, dm(y)$$ for any $t>0$, $x \in K_n$, and $f \in \mathcal{B}_{b}(K_n)$. 
\end{lemma}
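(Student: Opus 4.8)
The plan is to realise the transition density of $X^n$ as the eigenfunction expansion $\widetilde p^{\,n}_t(x,y):=\sum_{k\ge1}e^{-\lambda_k t}\varphi_k(x)\varphi_k(y)$, to prove that this series converges uniformly on $[t_0,\infty)\times K_n\times K_n$ for every $t_0>0$, and then to read off continuity, the representation formula, and positivity. Here $\{\lambda_k\}_{k\ge1}\subset[0,\infty)$ and $\{\varphi_k\}_{k\ge1}$ denote, as in Lemma~\ref{lem:contiversion}, the eigenvalues and an $L^2(K_n,m)$-orthonormal system of eigenfunctions of $-\mathcal L^n$, the $\varphi_k$ taken in their bounded continuous versions. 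As a preliminary I would record the bounds. By Lemma~\ref{lem:ind} and \eqref{nhke}, $p_t^n$ has a symmetric measurable density $p_t^n(x,y)$ and $\|p_t^n\|_{L^1(K_n,m)\to L^\infty(K_n,m)}\le M(t)$ for some $M\colon(0,\infty)\to(0,\infty)$ locally bounded (this bound was already recorded before the statement). Using $p_{t/2}^n\varphi_k=e^{-\lambda_k t/2}\varphi_k$ and Parseval's identity one gets, for $m$-a.e.\ $x\in K_n$,
\[
\sum_{k\ge1}e^{-\lambda_k t}\varphi_k(x)^2=\|p_{t/2}^n(x,\cdot)\|_{L^2(K_n,m)}^2\le\|p_{t/2}^n(x,\cdot)\|_{\infty}\,\|p_{t/2}^n(x,\cdot)\|_{1}\le M(t/2).
\]
Since each partial sum $x\mapsto\sum_{k=1}^{N}e^{-\lambda_k t}\varphi_k(x)^2$ is continuous and $K_n$ is open, this inequality in fact holds for \emph{every} $x\in K_n$, so $\Phi_t(x):=\sum_{k\ge1}e^{-\lambda_k t}\varphi_k(x)^2\le M(t/2)$ on $K_n$. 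I would also note that, $K_n$ being bounded, $\sum_{k\ge1}e^{-\lambda_k s}=\mathrm{Tr}(p_s^n)=\int_{K_n}\Phi_s\,dm\le M(s/2)\,m(K_n)<\infty$ for every $s>0$.

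For the uniform convergence, fix $t_0>0$. For $t\ge t_0$ and $N'\ge N$, bounding the single term $e^{-\lambda_k t_0/2}\varphi_k(x)^2$ by $\Phi_{t_0/2}(x)\le M(t_0/4)$ and using $e^{-\lambda_k(t-t_0/2)}\le e^{-\lambda_k t_0/2}$ gives
\[
\sum_{k=N+1}^{N'}e^{-\lambda_k t}\varphi_k(x)^2\le M(t_0/4)\sum_{k=N+1}^{\infty}e^{-\lambda_k t_0/2},
\]
and the right-hand side tends to $0$ as $N\to\infty$, uniformly in $(t,x)\in[t_0,\infty)\times K_n$, by the finiteness of the trace just recorded. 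Hence $\Phi_t$ converges uniformly there, and by the Cauchy--Schwarz inequality so does the series $\widetilde p^{\,n}_t(x,y)$ on $[t_0,\infty)\times K_n\times K_n$; as $t_0>0$ is arbitrary, $\widetilde p^{\,n}$ is continuous on $(0,\infty)\times K_n\times K_n$.

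To identify $\widetilde p^{\,n}$ with the density, fix $t>0$ and $f\in\mathcal B_b(K_n)\subset L^2(K_n,m)$. Integrating the uniformly convergent series for $\widetilde p^{\,n}_t(x,\cdot)$ against $f$ over the finite-measure space $K_n$ yields $\int_{K_n}\widetilde p^{\,n}_t(x,y)f(y)\,dm(y)=\sum_{k\ge1}e^{-\lambda_k t}\langle f,\varphi_k\rangle\varphi_k(x)$, and a further Cauchy--Schwarz estimate (using $\sum_{k>N}\langle f,\varphi_k\rangle^2\to0$ and $\Phi_t\le M(t/2)$) shows this last series converges uniformly on $K_n$. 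Since its partial sums also converge to $p_t^nf$ in $L^2(K_n,m)$, we obtain $p_t^nf(x)=\int_{K_n}\widetilde p^{\,n}_t(x,y)f(y)\,dm(y)$ for $m$-a.e.\ $x$, hence for all $x\in K_n$, because both sides are continuous on $K_n$ (the left-hand side by the strong Feller property of $X^n$ in Lemma~\ref{lem:ind}). Thus $p_t^n(x,y):=\widetilde p^{\,n}_t(x,y)$ is a continuous version of the heat kernel of $X^n$.

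Finally, positivity. Markovianity forces $\widetilde p^{\,n}_t\ge0$, and on the diagonal $\widetilde p^{\,n}_t(x,x)\ge e^{-\lambda_1 t}\varphi_1(x)^2>0$ for all $x\in K_n$ by the positivity of the principal eigenfunction in Lemma~\ref{lem:contiversion}. For $x\ne y$, were $\widetilde p^{\,n}_t(x,y)=0$, the Chapman--Kolmogorov identity would force the continuous nonnegative function $z\mapsto\widetilde p^{\,n}_{t/2}(x,z)\widetilde p^{\,n}_{t/2}(z,y)$ to vanish identically on $K_n$, so the open neighbourhoods of $x$, resp.\ $y$, on which $\widetilde p^{\,n}_{t/2}(x,\cdot)$, resp.\ $\widetilde p^{\,n}_{t/2}(\cdot,y)$, is positive would be disjoint; this contradicts the irreducibility of $X^n$ noted in the proof of Lemma~\ref{lem:contiversion} — concretely, one chains the local lower heat kernel bounds available on the bounded Lipschitz domains $J_n$ (cf.\ \cite[Theorem~3.1]{BH}, \cite[Theorem~3.10]{GS}) along a path joining $x$ to $y$ inside the connected open set $K_n$. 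I expect the uniform diagonal bound $\sup_{x\in K_n}\Phi_t(x)\le M(t/2)$ to be the crux of the argument: since $K_n$ is not compact and the $\varphi_k$ are a priori only individually continuous, this cannot be read off directly, and the device is to upgrade the $m$-a.e.\ bound coming from ultracontractivity to a pointwise one via continuity of the partial sums, after which finiteness of the trace makes the whole expansion converge uniformly.
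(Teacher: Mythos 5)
Your construction and identification of the kernel follow the paper's route: you expand in the eigenbasis of $-\mathcal L^n$, prove locally uniform convergence of $\sum_k e^{-\lambda_k t}\varphi_k(x)\varphi_k(y)$ (you in effect reprove \cite[Theorem~2.1.4]{D}, which the paper simply cites, via ultracontractivity, the a.e.-to-everywhere upgrade of the diagonal bound, and finiteness of the trace), and you pass from the $m$-a.e.\ identity to every $x\in K_n$ using continuity and the strong Feller property from Lemma~\ref{lem:ind}. All of that is sound and essentially identical to the paper. The genuine gap is in the positivity assertion, which is part of the statement. From $\widetilde p^{\,n}_t(x,y)=0$ you correctly get that the nonempty open sets $\{z:\widetilde p^{\,n}_{t/2}(x,z)>0\}$ and $\{z:\widetilde p^{\,n}_{t/2}(z,y)>0\}$ are disjoint, but two disjoint nonempty open subsets of a connected set are not by themselves a contradiction, and irreducibility of $X^n$ is a measure-theoretic statement about invariant sets; converting it into pointwise positivity of the kernel at a \emph{prescribed} time $t$ is precisely what has to be proved (it could be done by invoking the equivalence of irreducibility with the positivity improving property of symmetric semigroups, but you neither cite nor prove that). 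Your ``concrete'' fix does not close the gap either: \cite[Theorem~3.1]{BH} and \cite[Theorem~3.10]{GS} concern the Neumann kernel $q^n_t$ on $J_n$, whereas the object here is the killed kernel $p^n_t=q^{n+1,n}_t$, and lower bounds for the unkilled kernel do not pass to the killed one; a chaining argument for $p^n_t$ at small times would need near-diagonal lower bounds for the killed kernel, which are neither cited nor established.

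For comparison, the paper's positivity proof needs no quantitative lower bounds at all: by continuity and the diagonal positivity $p^n_1(z,z)>0$, every point of a path from $x$ to $y$ has a neighborhood on which $p^n_1(\cdot,\cdot)>0$, and Chapman--Kolmogorov along a finite subchain gives $p^n_N(x,y)>0$ for some integer $N$; this alone, however, only shows positivity after some finite time $t_\ast<\infty$. The passage to \emph{all} $t>0$ --- the step missing from your argument --- is done by analytic continuation: the eigenfunction expansion extends $z\mapsto p^n_z(x,y)$ holomorphically to the half-plane of convergence, so vanishing on an interval $(0,t_\ast]$ would force identical vanishing, contradicting $t_\ast<\infty$. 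You should either add this analyticity step (your uniform convergence estimates already give the required holomorphy) or supply genuine local lower bounds for the killed kernel; as written, your proof establishes positivity only on the diagonal.
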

\begin{proof}
Recall that $\{\lambda_k\}_{k=1}^{\infty} \subset [0,\infty]$ and $\{\varphi_k\}_{n=1}^{\infty}$ are the eigenvalues and the eigenfunctions of $-\mathcal{L}^n$, respectively. By Lemma~\ref{lem:contiversion}, we may assume $\{\varphi_k\}_{n=1}^{\infty}$ are bounded continuous on $K_n$. By \cite[Theorem~2.1.4]{D}, the series
\begin{equation*}
p_{t}^{n}(x,y):=\sum_{k=1}^{\infty}e^{-\lambda_{k} t} \varphi_{k}(x)\varphi_{k}(y)
\end{equation*}
absolutely converges uniformly on $[\eps,\infty) \times K_n \times K_n$ for any $\eps>0$. Each $\varphi_k$ is bounded continuous on $K_n$. Therefore, $p_{t}^n(x,y)$ becomes a bounded continuous function on $[\eps,\infty) \times K_n \times K_n$ for any $\eps>0$. For any $t>0$ and $f \in \mathcal{B}_{b}(K_n)$, $p_{t}^n(x,y)$ also defines an integral kernel of $\{p_{t}^n\}_{t>0}$: 
\begin{equation}
p_{t}^{n}f(x)=\int_{K_n}p_{t}^{n}(x,y)f(y)\,dm(y),\quad m \text{-a.e. }x\in K_n \label{eq:ahke}.
\end{equation}
By the positivity of $p_{t}^n$ and \eqref{eq:ahke}, $p_{t}^n(x,y) \ge 0$ for any $t>0$ and $(x,y) \in \ob{D} \times \ob{D}$. 
By Lemma~\ref{lem:ind}, $p_t^{n}f$ is a continuous function on $K_n$, and $p_{t}^n(x,y)$ is bounded continuous on $K_n \times K_n$, which implies that \eqref{eq:ahke} holds for any $x \in K_n$. 

Following the same argument as in \cite[Theorem~A.4]{Ki}, we prove the positivity of $p_{t}^n(x,y)$. By Lemma~\ref{lem:contiversion}, it holds that $\varphi_1(x)>0$ for any $x \in K_n$.
Therefore, it holds that
\begin{equation}
p_{t}^{n}(x,x)=\sum_{k=1}^{\infty}e^{-\lambda_k t}\varphi_k(x)^2>0 \label{eq:positive1}
\end{equation}
for any $t>0$ and $x \in K_n$. Let $x,y \in K_n$ and assume that $p_{s}(z,y)>0$ for some $s>0$. Then, for any $t,s >0$ with $t>s$, we have
\begin{equation}
p_{t}^n(x,y)=\int_{K_n}p_{s}^n(x,z)p_{t-s}^n(z,y)\,dm(z). \label{eq:positive2}
\end{equation}
Thus, $p_{t}^n(x,y)>0$ by the continuity of $p_{t}^n(x,y)$ and \eqref{eq:positive1}. This implies that there exists $t_\ast \in [0,\infty]$ such that $p_{t}^n(x,y)=0$ for any $t \in (0,t_{\ast}]$ and $p_{t}^n(x,y)>0$ for any $t \in (t_{\ast},\infty)$. We shall show that $t_{\ast}$ is finite. Since $K_n$ is arcwise connected, there exists a continuous function $\gamma:[0,1] \to K_n$ such that $\gamma(0)=x$ and $\gamma(1)=y$. By the continuity of $p_{t}^n(x,y)$ and \eqref{eq:positive1}, for any $s \in [0,1]$, there exists an open neighborhood $O_s \subset K_n$  of $\gamma(s)$ such that for any $z,w \in O_s$
\begin{equation*}
p_{1}^n(z,w)>0
\end{equation*}
Since $\gamma[0,1]$ is a compact subset of $K_n$, there exists $N \in \N$ and $\{s_i\}_{i=0}^{N}$ such that $0=s_0<s_1<\cdots<s_N=1$ and $x_i \in O_{i+1}$ for any $i=0,1,\ldots,N-1$, where $x_i=\gamma(s_i)$. \eqref{eq:positive1} and \eqref{eq:positive2} yield that
\begin{equation*}
p_{n'}^n(x,y)=\int_{K_n}\cdots \int_{K_n}p_{1}^n(x,x_1)p_{1}^n(x_1,x_2)\cdots p_{1}^n(x_{N-1},y)\,dm(x_1)\cdots dm(x_{N-1})>0,
\end{equation*}
which implies $t_{\ast} \le N<\infty$. Let $\mathbb{H}$ be the upper half-plane of $\mathbb{C}$. Then, 
$$\sum_{k=1}^{\infty}e^{-\lambda_{k} z} \phi_{k}(x)\phi_{k}(y)$$
converges uniformly on compact subsets of $\mathbb{H}$. Thus, $p_{z}^n(x,y)$ is extended to a holomorphic function on $\mathbb{H}$. If $t_{\ast}>0$, $p_{t}^n(x,y)=0$ for any $t \in (0,t_{\ast}]$. It also holds that $p_{z}^n(x,y)=0$ for any $z \in \mathbb{H}$. This contradicts to the fact that $t_{\ast}<\infty$. Hence, we have $t_{\ast}=0$.
\end{proof}

In what follows, $p_{t}^{n}(\cdot,\cdot)$ is extended to a function on $\ob {D} \times \ob{D}$ by setting $
p_{t}^n(\cdot,\cdot)=0 $ outside $K_n \times K_n.$
\begin{lemma}\label{lem:localunif}
It holds that
$
\lim_{n \to \infty}P_{x}(\tau_n \le t)=0
$ uniformly in $(t,x)$ over each compact subset of $[0,\infty) \times \ob{D}$.
\end{lemma}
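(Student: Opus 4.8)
The plan is to reduce the claim to a fixed time horizon and then invoke Dini's theorem. Given a compact set $A \subset [0,\infty) \times \ob{D}$, write $T$ for the maximum of the first coordinate over $A$ and $L$ for the (compact) image of $A$ under the projection onto $\ob{D}$. Since $s \mapsto P_x(\tau_n \le s)$ is non-decreasing, $\sup_{(t,x) \in A} P_x(\tau_n \le t) \le \sup_{x \in L} P_x(\tau_n \le T)$, so it suffices to show that for every $T \in (0,\infty)$ and every compact $L \subset \ob{D}$ one has $\sup_{x \in L} P_x(\tau_n \le T) \to 0$ as $n \to \infty$. To this end I would set $g_n(x) := P_x(\tau_n > T)$ for $x \in K_n$ and $g_n := 0$ off $K_n$. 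Since $X^n$ is the part process of $X$ on $K_n$, one has $g_n(x) = p_T^n \mathbf{1}_{K_n}(x)$ for $x \in K_n$, so Lemma~\ref{lem:ind} shows that $g_n$ is continuous on $K_n$; moreover $(g_n)_n$ is non-decreasing because $U_n \subset U_{n+1}$ forces $\tau_n \le \tau_{n+1}$, hence $\{\tau_n > T\} \subset \{\tau_{n+1} > T\}$.

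The next step is to show $g_n \uparrow 1$ pointwise on $\ob{D}$. Fix $x \in \ob{D}$. Since $X$ is conservative and has continuous sample paths in $\ob{D}$, for $P_x$-a.e. $\omega$ the image $\{X_s(\omega) : 0 \le s \le T\}$ is a compact subset of $\ob{D} = \bigcup_n K_n$; as the $K_n$ are increasing open sets, this image lies in some $K_N$, and then, using right-continuity of $s \mapsto X_s(\omega)$ at $s = T$ together with the openness of $K_N$ in $\ob{D}$, we even have $X_s(\omega) \in K_N$ for $s \in [0, T+\delta]$ with some $\delta = \delta(\omega) > 0$, so that $\tau_N(\omega) > T$ and therefore $\tau_n(\omega) > T$ for all $n \ge N$. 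Hence $\mathbf{1}_{\{\tau_n > T\}} \uparrow 1$ $P_x$-a.s., and monotone convergence gives $g_n(x) \to 1$.

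Finally, fix $T$ and $L$ and choose $m_0$ with $L \subset K_{m_0}$, which is possible since $L$ is compact and $\{K_n\}$ is an increasing open cover of $\ob{D}$. For $n \ge m_0$ the restriction of $g_n$ to $L$ is continuous, so $(g_n|_L)_{n \ge m_0}$ is a non-decreasing sequence of continuous functions on the compact set $L$ converging pointwise to the continuous function $\mathbf{1}$; by Dini's theorem the convergence is uniform on $L$, i.e. $\sup_{x \in L} P_x(\tau_n \le T) = \sup_{x \in L}(1 - g_n(x)) \to 0$. This proves the lemma. The argument is soft, and the only delicate point is obtaining the strict inequality $\tau_N > T$ rather than $\tau_N \ge T$ in the pointwise-convergence step; this is where right-continuity of the paths and the openness of $K_N$ in $\ob{D}$ are used, and it is what makes the monotone convergence applicable.
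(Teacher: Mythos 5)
Your proof is correct and follows essentially the same route as the paper: continuity of $x\mapsto P_x(\tau_n\le t)=1-p_t^n\bone_{K_n}(x)$ via Lemma~\ref{lem:ind}, monotonicity in $n$, and Dini's theorem on a compact set. The only difference is that you establish the pointwise convergence $P_x(\tau_n\le t)\to 0$ directly from conservativeness and sample-path continuity (a valid, self-contained argument, including the point that openness of $K_N$ in $\ob{D}$ and continuity at time $t$ give $\tau_N>t$ rather than $\tau_N\ge t$), whereas the paper simply quotes \cite[Lemma~6.8]{Ma} for this step.
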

\begin{proof}
By monotonicity, it suffices to show that $\lim_{n \to \infty}\sup_{x \in K}P_{x}(\tau_n \le t)=0$ for any $t>0$ and any
compact subset $K \subset \ob{D}$. We may assume $K \subset K_1$. It holds that
$$
P_{x}(\tau_n \le t)=1-P_{x}(t<\tau_n)=1-p_{t}^n\bone_{K_n}(x)
$$
for any $x \in K$ and $n \in \N$. By Lemma~\ref{lem:ind}, $p_{t}^{n}\bone_{K_n}$ is continuous on $K_n$. Hence, $P_{(\cdot)}(\tau_n \le t)$ is a continuous function on $K$. It follows from \cite[Lemma~6.8]{Ma} that
$$\lim_{m \to \infty}P_{x}(\tau_{m} \le t)=0$$
 for any $x \in K$. Since the convergence is monotone and non-increasing, we complete the proof by Dini's theorem.
\end{proof}

\section{Proof of Theorem~\ref{thm:thm1}}
In what follows, we fix $R \in (0,\infty)$. Recall that $\ob{D}_R$ is an open subset of $\ob{D}$: $\ob{D}_R=\ob{D} \cap B(R)$. We take $N\in \N$ such that
$
\bigcup_{(x,r) \in \ob{D}_R \times (0,R/2)}B(x,r) \subset K_N \subset J_{N+1}.
$ Note that $N$ depends only on $\ob{D}$ and $R$.
\begin{lemma}\label{lem:exit1} 
There exists a constant $\dl_R \in (0,1)$ which depends on $R$  such that
\begin{equation*}
Q_{x}^{N+1}(Y_{t}^{N+1} \in J_{N+1}\setminus B(x,r)) \le 1/4
\end{equation*}
for any $x \in \ob{D}_{R}$, $r \in (0,R/2)$, and $t  \in (0,\dl_R r^2]$.
\end{lemma}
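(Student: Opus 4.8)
The plan is to bound the exit probability directly using the Gaussian upper estimate \eqref{nhke} for the transition density $q_{t}^{N+1}(x,y)$ of the reflecting Brownian motion $Y^{N+1}$ on the bounded Lipschitz domain $J_{N+1}$. By the absolute continuity of the transition kernel of $Y^{N+1}$,
\[
Q_{x}^{N+1}(Y_{t}^{N+1} \in J_{N+1}\setminus B(x,r)) = \int_{J_{N+1}\setminus B(x,r)} q_{t}^{N+1}(x,y)\,dm(y).
\]
First I would enlarge the region of integration to $\R^d \setminus B(x,r)$ and apply \eqref{nhke} to obtain
\[
Q_{x}^{N+1}(Y_{t}^{N+1} \in J_{N+1}\setminus B(x,r)) \le a_{N+1}e^{t}t^{-d/2}\int_{\R^d\setminus B(x,r)}\exp\!\left(-\frac{|x-y|^2}{b_{N+1}t}\right)dm(y).
\]

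Next I would rescale by the substitution $z=(y-x)/\sqrt{t}$, which turns the right-hand side into
\[
a_{N+1}e^{t}\int_{\R^d\setminus B(0,\,r/\sqrt{t})}\exp\!\left(-\frac{|z|^2}{b_{N+1}}\right)dm(z).
\]
Now I would use the constraints on $t$ and $r$: since $t\le \dl_R r^2$ with $\dl_R\in(0,1)$ and $r<R/2$, we have $t<R^2/4$, hence $e^{t}\le e^{R^2/4}$, and also $r/\sqrt{t}\ge \dl_R^{-1/2}$, so the bound becomes
\[
Q_{x}^{N+1}(Y_{t}^{N+1} \in J_{N+1}\setminus B(x,r)) \le a_{N+1}e^{R^2/4}\int_{\R^d\setminus B(0,\,\dl_R^{-1/2})}\exp\!\left(-\frac{|z|^2}{b_{N+1}}\right)dm(z),
\]
an expression that no longer depends on $x$ or $r$.

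Finally, since $N$ depends only on $\ob{D}$ and $R$, the constants $a_{N+1},b_{N+1}$ depend only on $R$, and because $\int_{\R^d\setminus B(0,\rho)}\exp(-|z|^2/b_{N+1})\,dm(z)\to 0$ as $\rho\to\infty$, I would fix $\dl_R\in(0,1)$ small enough that $a_{N+1}e^{R^2/4}\int_{\R^d\setminus B(0,\dl_R^{-1/2})}\exp(-|z|^2/b_{N+1})\,dm(z)\le 1/4$; this is possible since the integral is nondecreasing in $\dl_R$ and tends to $0$ as $\dl_R\to 0$. This yields the claim uniformly over all $x\in\ob{D}_R$, $r\in(0,R/2)$, and $t\in(0,\dl_R r^2]$. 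I do not expect a genuine obstacle here: the only point requiring attention is the uniformity in $x$ and $r$, which is automatic because \eqref{nhke} holds with constants depending only on the fixed domain $J_{N+1}$, and the decisive smallness comes purely from the Gaussian tail after rescaling.
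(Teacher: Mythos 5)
Your proof is correct and follows essentially the same route as the paper: apply the Gaussian upper bound \eqref{nhke}, enlarge the integration region to the complement of $B(x,r)$ in $\R^d$, and choose $\dl_R$ small so that the rescaled Gaussian tail (with lower cutoff $\dl_R^{-1/2}$, uniform in $x$ and $r$) is at most $1/4$. The only cosmetic difference is that you rescale by $z=(y-x)/\sqrt{t}$ while the paper passes to polar coordinates, which amounts to the same tail estimate.
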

\begin{proof}
Let $\dl_R \in (0,1)$ be a constant to be determined. 
By \eqref{nhke}, it holds that for any $(x,r) \in \ob{D}_{R} \times (0,R/2)$ and any $t  \in (0,\dl_R r^2]$
\begin{align*}
&Q_{x}^{N+1}(Y_{t}^{N+1} \in J_{N+1}\setminus B(x,r))\\
&\le a_{N+1}e^{t}t^{-d/2} \int_{\R^d \setminus B(r)} \exp\left( -|y|^2/(b_{N+1}t) \right)\,dy\\
&= \frac{2 \pi ^{d/2}a_{N+1}e^{t}t^{-d/2}}{\Gamma(d/2)}  \int_{r}^{\infty} s^{d-1} \exp(-s^2/b_{N+1}t)\,ds\\
&\le \frac{2 a_{N+1}  (\pi b_{n+1})^{d/2}e^{\dl_R R^2/4}}{\Gamma(d/2)}
 \int_{r/\sqrt{b_{N+1}t}}^{\infty} s^{d-1}\exp(-s^2)\,ds.
\end{align*}
Here $\Gamma$ is the gamma function. We take  $\dl_R \in (0,1 \wg (4/R^2))$ so that
\begin{equation*}
 \frac{2 a_{N+1}  (\pi b_{N+1})^{d/2}}{\Gamma(d/2)} \int_{1/\sqrt{b_{N+1}\dl_R}}^{\infty} s^{d-1}\exp(-s^2)\,ds \le \frac{1}{4},
 \end{equation*}
 which completes the proof.
\end{proof}
For each $(x,r) \in \ob{D}_{R} \times (0,R/2)$, we define stopping times as follows:
\begin{align*}
\tau_{B(x,r)}&=\inf \{t>0 \mid X_{t} \in \ob{D} \setminus B(x,r)\},\\
\tau_{B(x,r)}^N&=\inf \{t>0 \mid X_{t}^N \in \ob{D} \setminus B(x,r)\}, \\
T_{B(x,r)}^{N+1}&=\inf \{t>0 \mid Y^{N+1}_{t} \in J_{N+1} \setminus B(x,r)\}, \\
T_{B(x,r)}^{N+1,N}&=\inf \{t>0 \mid Y^{N+1,N}_{t} \in K_{N+1} \setminus B(x,r)\}.
\end{align*}
Using Lemma~\ref{lem:exit1} and applying \cite[Theorem~7.2]{GK} to the conservative diffusion process $Y^{N+1}$ on $J_{N+1}$, we obtain the next corollary.
\begin{cor}\label{cor:exit1}
There exist positive constants $c_R,\gamma_R$ depend on $R$ such that 
\begin{equation*}
Q_{x}^{N+1}(T_{B(x,r)}^{N+1} \le t) \le c_R \exp (-\gamma_R r^2/t)
\end{equation*}
for any $(x,r) \in \ob{D}_R \times (0,R)$ with $B(x,r) \subset B(R)$ and $t \in (0,\infty)$.
\end{cor}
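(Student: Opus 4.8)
The plan is to feed Lemma~\ref{lem:exit1} into the general exit-time estimate \cite[Theorem~7.2]{GK}, which converts a single ``slow exit at a fixed time scale'' bound into a Gaussian-type exit-time tail bound, and then to patch the two ranges of radii ($r \in (0,R/2)$, where Lemma~\ref{lem:exit1} applies directly, and $r \in [R/2,R)$, which is handled trivially by monotonicity). First I would recall that $Y^{N+1}$ is a conservative diffusion on the (bounded Lipschitz, hence locally compact) space $J_{N+1}$; Lemma~\ref{lem:exit1} gives a constant $\dl_R \in (0,1)$ with
\begin{equation*}
Q^{N+1}_x\bigl(Y^{N+1}_t \in J_{N+1}\setminus B(x,r)\bigr) \le \tfrac14
\qquad \text{for all } x \in \ob{D}_R,\ r \in (0,R/2),\ t \in (0,\dl_R r^2].
\end{equation*}
This is precisely the hypothesis required by \cite[Theorem~7.2]{GK} (a uniform-in-$x$ control of the ``escape probability'' over balls $B(x,r)$ on the time scale $t \asymp r^2$, with the value $\tfrac14 < \tfrac12$ being the relevant smallness threshold). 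Applying that theorem yields constants $c_R, \gamma_R \in (0,\infty)$, depending only on $\dl_R$ (hence only on $R$ and $\ob D$), such that
\begin{equation*}
Q^{N+1}_x\bigl(T^{N+1}_{B(x,r)} \le t\bigr) \le c_R \exp(-\gamma_R r^2/t)
\end{equation*}
for all $x \in \ob{D}_R$, $r \in (0,R/2)$, and $t \in (0,\infty)$.

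It remains to extend the range of $r$ from $(0,R/2)$ to all $r$ with $B(x,r) \subset B(R)$ (in particular $r \in [R/2,R)$). For such $r$, the event $\{T^{N+1}_{B(x,r)} \le t\}$ is contained in $\{T^{N+1}_{B(x,R/2)} \le t\}$ by monotonicity of exit times in the radius, so
\begin{equation*}
Q^{N+1}_x\bigl(T^{N+1}_{B(x,r)} \le t\bigr) \le Q^{N+1}_x\bigl(T^{N+1}_{B(x,R/2)} \le t\bigr) \le c_R \exp(-\gamma_R (R/2)^2/t) \le c_R \exp(-\gamma_R' r^2/t),
\end{equation*}
where $\gamma_R' = \gamma_R/4$ works since $r < R$ forces $(R/2)^2 > r^2/4$; after relabelling $\gamma_R' $ as $\gamma_R$ we obtain the stated bound uniformly over all admissible $(x,r,t)$, possibly after enlarging $c_R$ and shrinking $\gamma_R$ to absorb the two regimes into single constants.

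The only genuine content is verifying that Lemma~\ref{lem:exit1} supplies exactly the input hypothesis of \cite[Theorem~7.2]{GK}: one must check that the constants there depend only on the escape-probability bound and the time-scale constant $\dl_R$, and not on any global geometric feature of $J_{N+1}$ that would spoil the claimed $R$-dependence — this is the step I would write out most carefully, though it is routine given the cited theorem. The radius-patching is elementary, and the conservativeness of $Y^{N+1}$ (needed so that $T^{N+1}_{B(x,r)}$ is the first exit time of a non-killed process) was already recorded in the construction of $Y^{N+1}$.
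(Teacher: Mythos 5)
Your proposal is correct and follows essentially the same route as the paper, which likewise obtains the corollary by applying \cite[Theorem~7.2]{GK} to the conservative diffusion $Y^{N+1}$ on $J_{N+1}$ with Lemma~\ref{lem:exit1} supplying the escape-probability hypothesis; the paper gives no further detail beyond this citation. Your additional radius-patching step (passing from $r\in(0,R/2)$ to all $r\in(0,R)$ with $B(x,r)\subset B(R)$ by monotonicity of exit times and adjusting $\gamma_R$) is a harmless and reasonable way to make the constants' dependence explicit.
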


We shall give a proof of Theorem~\ref{thm:thm1}.
\begin{proof}[Proof of Theorem~\ref{thm:thm1}~(i)]
Let $t \in (0,\infty)$ and $(x,r) \in \ob{D}_R \times (0,R)$ with $B(x,r) \subset B(R)$. It clearly holds that $T^{N+1,N}_{B(x,r)}=T^{N}_{B(x,r)}$. Hence, it follows that
\begin{equation}
Q_{x}^{N+1}(T_{B(x,r)}^{N+1,N} \le t)=Q_{x}^{N+1}(T_{B(x,r)}^{N+1} \le t)\label{eq:eq3}.
\end{equation}
By Lemma~\ref{lem:ind}, we have
\begin{equation}
Q_{x}^{N+1}(T_{B(x,r)}^{N+1,N} \le t)=P_{x}(\tau_{B(x,r)}^{N} \le t) \label{eq:eq2}.
\end{equation}
Since $\tau_{B(x,r)}^{N}=\tau_{B(x,r)}$, it holds that
\begin{equation}
P_{x}(\tau_{B(x,r)}^N \le t)=P_{x}(\tau_{B(x,r)} \le t).\label{eq:eq1}
\end{equation}
By using the equalities \eqref{eq:eq1}, \eqref{eq:eq2}, \eqref{eq:eq3}, and Corollary~\ref{cor:exit1}, we complete the proof. 
\end{proof}
\begin{proof}[Proof of Theorem~\ref{thm:thm1}~(ii)]
We check the conditions $\text{(DB)}_{\beta}$, $\text{(DU)}_{F}^{U,R}$, and $\text{(P)}_{\beta}^{U,R}$ in \cite[Theorem~1.1]{GK} with $\beta=2$, $U=\ob{D}_R$. The third condition has already shown in Theorem~\ref{thm:thm1}~(i). We define $F_{t}(x,y): (0,R^2] \times\ob{D}_{R} \times \ob{D}_{R} \to [0,\infty)$ by $F_{t}(x,y)=a_{N+1} e^{t}t^{-d/2}.$ Then,  it holds that $F_{s}(z,w)/F_{t}(x,y)\le (t/s)^{d/2}$
 for any $(t,x,y), (s,z,w) \in  (0,R^2] \times \ob{D}_{R} \times \ob{D}_{R}$ with $s\le t$. This implies $\text{(DB)}_{\beta}$. We shall check $\text{(DU)}_{F}^{U,R}$. 
It follows from Lemma~\ref{lem:ind} and  \eqref{nhke} that
\begin{align*}
&P_{x}(X_t \in A, t <\tau_{ \ob{D}_R}) =P_{x}(X_t^N \in A, t <\tau_{ \ob{D}_R}) \\
&\le Q_{x}(Y_t^{N+1,N} \in A)\le \int_{A}a_{N+1} e^{t} t^{-d/2}\exp\left(-|x-y|^2/b_{N+1} t \right)\,dm(y) \\
&\le \int_{A}F_{t}(x,y)\,dm(y)
\end{align*}
for any $(t,x) \in (0,R^2) \times \ob{D}_R$ and any $A \in \mathcal{B}(\ob{D}_{R})$, which implies $\text{(DU)}_{F}^{U,R}$.
\end{proof}
\begin{proof}[Proof of Theorem~\ref{thm:thm1}~(iii)]
We write $E_x$ for the expectation with respect to the probability measure $P_x$.
Take $\eps \in (0,1)$ and $R>0$.  Let $f \in \mathcal{B}_{b}(\ob{D})$ be a nonnegative function with $f|_{\ob{D} \setminus \ob{D}_{\eps,R}}=0$. Let $n,n' \in \N$ with $n>n'$ and $\ob{D}_{R+1} \subset K_{n'}$. Recall $\tau_n=\inf \{t>0 \mid X_{t} \in \ob{D} \setminus K_n\}$. It holds that $P_{x}(X_{\tau_{n'}} \in K_{n'})=0$ for any $x \in K_{n'}$. For any $x \in K_{n'}$ and any $t>0$,  we have $E_{x}[\bone_{\{\tau_{n'}=t\}}f(X_t)]=0$ since $f=0$ on $\ob{D} \setminus K_{n'}$. Thus, it holds that
\begin{align}
p_{t}^{n}f(x)
&=E_{x}[f(X_t):t<\tau_n]=p_{t}^{n'}f(x)+E_{x}[f(X_t) \bone_{\{ \tau_{n'}<t<\tau_n \}}]. \label{eq:eql}
\end{align}
for any $t>0$ and $x \in K_{n'}$. We denote by $\{\theta_t\}_{t \ge 0}$ the shift operator of $X$. Using the relation $\tau_{n'} \le \tau_n =\tau_{n'}+\tau_{n} \circ \theta_{\tau_{n'}}$ and the strong Markov property \cite[Proposition~3.4]{GK} of $X$, we obtain
\begin{align}
&E_{x}[f(X_t) \bone_{\{ \tau_{n'}<t<\tau_n \}}] \notag \\
&=E_{x}[\bone_{\{ \tau_{n'}<t\}} \bone_{\{ t< \tau_{n'}+\tau_{n} \circ \theta_{\tau_{n'}}\}} f(X_t) ] \notag \\
&=E_{x}[\bone_{\{ \tau_{n'}<t\}}E_{X_{\tau_{n'}}}[f(X_{t-\tau_{n'}})\bone_{\{ t- \tau_{n'}<\tau_{n} \}}] ] \notag \\
&=E_{x}[\bone_{\{ \tau_{n'}<t\}}E_{X_{\tau_{n'}}}[f(X_{t-\tau_{n'}}^n)] ]=E_{x}[\bone_{\{ \tau_{n'}<t\}}p_{t-\tau_{n'}}^{n}f(X_{\tau_{n'}}) ] \label{eq:eql2}.
\end{align}
It follows from \eqref{eq:eql} and \eqref{eq:eql2} that
\begin{align}
0 &\le p_{t}^{n}f(x)-p_{t}^{n'}f(x)=E_{x}[\bone_{\{ \tau_{n'}<t\}}p_{t-\tau_{n'}}^{n}f(X_{\tau_{n'}}) ] \notag \\
&=  E_{x}\left[\int_{\ob{D}_{\eps,R}}p_{t-\tau_{n'}}^{n}(X_{\tau_{n'}},y)f(y)\,dm(y):\tau_{n'}<t\right]. \label{inq:inqbound3}
\end{align}
It is easy to see
\begin{align}
&\sup_{s \in (0, R^2 \wg t]}\sup_{x \in J_m \setminus \ob{D}_{R+1}} \sup_{y \in \ob{D}_{\eps,R}}p_{s}^{n}(x,y) \le \sup_{s \in (0,R^2 \wg t]}\sup_{x \notin \ob{D}_R} \esssup_{y \in \ob{D}_{\eps,R}}p_{s}(x,y) \label{eq:inqbound1},
\end{align}
where $\esssup$ denotes the essential supremum with respect to $m$.
It also holds that
\begin{align}
&\sup_{s \in [R^2 \wg t, t]}\sup_{x \in J_m \setminus \ob{D}_{R+1}}  \sup_{y \in \ob{D}_{\eps,R}}p_{s}^{n}(x,y)  \le \sup_{s \in [R^2 \wg t, t]}\sup_{x \in \ob{D}} \esssup_{y \in \ob{D}_{\eps,R}}p_{s}(x,y). \label{eq:inqbound2}
\end{align}
By Theorem~\ref{thm:thm1}~(i), both \eqref{eq:inqbound1} and \eqref{eq:inqbound2} are bounded above by a positive constant depends on $\eps$ and $R$, say $C_{\eps,R}$. Since $X_{\tau_{n'}} \in J_{n'} \setminus \ob{D}_{R+1}$, \eqref{inq:inqbound3} implies that 
\begin{equation}
0 \le p_{t}^{n}(x,y)-p_{t}^{n'}(x,y)\le C_{\eps,R} \times P_{x}[\tau_{n'} \le t] \label{eq:conti}
\end{equation}
for any $(t,x,y) \in (0,\infty) \times K_{n'} \times \ob{D}_{\eps,R}$. 

For each $(t,x,y) \in (0,\infty) \times \ob{D} \times \ob{D}$, we define 
\begin{equation*}
p_{t}^{\ast}(x,y):=\lim_{n \to \infty}p_{t}^{n}(x,y).
\end{equation*}
By \eqref{eq:conti}, Lemma~\ref{lem:eigen}, and Lemma~\ref{lem:localunif}, $p_{t}^{\ast}(x,y)$ is a continuous function on $ (0,\infty) \times \ob{D} \times \ob{D}_{\eps,R}$. Since $\eps \in (0,1)$, $R>0$ are arbitrarily chosen, $p_{t}^{\ast}(x,y)$ is also  continuous on $(0,\infty)\times \ob{D} \times \ob{D}$. For any $x \in \ob{D}$, $t>0$, and nonnegative function $f \in \mathcal{B}_{b}(\ob{D})$, we have
\begin{align*}
E_{x}[f(X_t):t<\tau_n]&=p_{t}^{n}f(x)=\int_{\ob{D}}p_{t}^{n}(x,y)f(y)\,dm(y).
\end{align*}
Monotone convergence theorem and Lemma~\ref{lem:localunif} yield
\begin{equation*}
E_{x}[f(X_t)]=\int_{\ob{D}}p_{t}^{\ast}(x,y)f(y)\,dm(y).
\end{equation*}
 By Lemma~\ref{lem:eigen}, $p_{t}^n(x,y)>0$ for any $t>0$, $n \in \N$, and $x,y \in K_n$. Hence, we have $p_{t}^{\ast}(x,y)>0$ for any $t>0$ and $x,y \in \ob{D}$. 
\end{proof}

\section{Proof of Theorem~\ref{cor:2}}
In what follows, we fix a non-empty open subset $U \subset \ob{D}$. Recall that each $X^{U \cap K_n}$ is the part process of $X$ on the open subset $U \cap K_n$ of $ \ob{D}$. Each $X^{U \cap K_n}$ is also regarded as the part process of $X^n$ on $U \cap K_n$. Thus, by Lemma~\ref{lem:ind} and \cite[Theorem~1]{CK}, the semigroup $\{p_{t}^{U, n}\}_{t >0}$ of $X^{U \cap K_n}$ is strong Feller. By repeating the same arguments as in Lemma~\ref{lem:contiversion} and Lemma~\ref{lem:eigen}, we obtain the next lemma.
\begin{lemma}\label{lem:localconti}
For any $n \in \N$, there exists a continuous function $p_{t}^{U,n}(x,y):(0,\infty) \times (U \cap K_n) \times (U \cap K_n) \to (0,\infty)$ such that $$p_{t}^{U,n}f(x)=\int_{K_n}p_{t}^{U,n}(x,y)f(y)\, dm(y)$$ for any $t>0$, $x \in U \cap K_n$ and $f \in \mathcal{B}_{b}(U \cap K_n)$. 
\end{lemma}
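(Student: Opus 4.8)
The plan is to repeat the proofs of Lemmas~\ref{lem:contiversion} and \ref{lem:eigen} with $K_n$ replaced by $U\cap K_n$; the one ingredient that has to be supplied afresh is the ultracontractivity of $\{p_t^{U,n}\}_{t>0}$. To obtain it, I regard $X^{U\cap K_n}$ as the part process of $X^n$ on $U\cap K_n$, so that $0\le p_t^{U,n}f\le p_t^n f$ for every nonnegative $f\in\mathcal{B}_b(U\cap K_n)$ (extended by $0$ to $K_n$). Since $m(U\cap K_n)\le m(K_n)<\infty$ and, by Lemma~\ref{lem:ind} and \eqref{nhke}, $p_t^n$ maps $L^1(K_n,m)$ boundedly into $L^\infty(K_n,m)$, it follows that $p_t^{U,n}$ maps $L^1(U\cap K_n,m)$ boundedly into $L^\infty(U\cap K_n,m)$, hence is Hilbert--Schmidt, in particular compact, on $L^2(U\cap K_n,m)$. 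Consequently the nonpositive generator $\mathcal{L}^{U,n}$ of $\{p_t^{U,n}\}_{t>0}$ has no essential spectrum; I write $\{\mu_k\}_{k\ge1}\subset[0,\infty)$ for its eigenvalues and $\{\psi_k\}_{k\ge1}$ for an $L^2$-orthonormal system of associated eigenfunctions.

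As in Lemma~\ref{lem:contiversion}, the identity $\psi_k=e^{-\mu_k}p_1^{U,n}\psi_k$ combined with the strong Feller property of $\{p_t^{U,n}\}_{t>0}$ (recorded just before the statement, via Lemma~\ref{lem:ind} and \cite[Theorem~1]{CK}) and the ultracontractivity above shows that each $\psi_k$ has a bounded continuous version on $U\cap K_n$. Then, following Lemma~\ref{lem:eigen}, I set $p_t^{U,n}(x,y):=\sum_{k\ge1}e^{-\mu_k t}\psi_k(x)\psi_k(y)$; by \cite[Theorem~2.1.4]{D} this series converges absolutely and uniformly on $[\eps,\infty)\times(U\cap K_n)\times(U\cap K_n)$ for each $\eps>0$, so $p_t^{U,n}(\cdot,\cdot)$ is continuous on $(0,\infty)\times(U\cap K_n)\times(U\cap K_n)$, and it represents $p_t^{U,n}$ first $m$-a.e.\ and then, using strong Feller together with continuity, for every $x\in U\cap K_n$. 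For positivity I would argue on each connected component $V$ of the open set $U\cap K_n$: since $V$ is open and closed in $U\cap K_n$ and $X$ has continuous paths, the process started in $V$ never leaves $V$, so $\{p_t^{U,n}\}_{t>0}$ restricts on $L^2(V,m)$ to a strong Feller, ultracontractive, irreducible semigroup; \cite[Lemma~6.4.5]{FOT} then supplies a strictly positive principal eigenfunction on $V$, and the Chapman--Kolmogorov/arcwise-connectedness/holomorphic-extension argument of Lemma~\ref{lem:eigen} goes through verbatim on $V$ (connected open subsets of $\ob{D}$ being path connected), yielding $p_t^{U,n}(x,y)>0$ for $x,y$ in the same component.

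The only point that is not a purely mechanical transcription is this last one: $U\cap K_n$ need not be connected, and the kernel vanishes between distinct components, so the strict positivity asserted in the statement must be understood componentwise --- $p_t^{U,n}(x,y)>0$ precisely when $x$ and $y$ lie in a common connected component of $U\cap K_n$, hence everywhere on $(U\cap K_n)\times(U\cap K_n)$ when that set is connected (for instance when $U$ itself is). All the remaining assertions are immediate from the corresponding steps already carried out for $X^n$.
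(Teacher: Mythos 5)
Your proposal is correct and follows essentially the same route as the paper, which simply observes the strong Feller property of $\{p_t^{U,n}\}_{t>0}$ (via Lemma~\ref{lem:ind} and \cite[Theorem~1]{CK}) and then repeats the arguments of Lemmas~\ref{lem:contiversion} and \ref{lem:eigen}; your domination argument $0\le p_t^{U,n}f\le p_t^{n}f$ supplying ultracontractivity and compactness is precisely the ingredient implicit in that repetition. Your caveat that strict positivity of the kernel can only be asserted componentwise when $U\cap K_n$ is disconnected is a legitimate refinement of the statement as printed, and it is consistent with Theorem~\ref{cor:2}, which claims positivity only under connectedness.
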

$p_{t}^{U,n}(\cdot,\cdot)$ is extended to a function on $\ob{D} \times \ob{D}$ by setting $p_{t}^{U,n}(\cdot,\cdot)=0$ outside $(U \cap K_n) \times (U \cap K_n) $. 
\begin{proof}[Proof of Corollary~\ref{cor:2}]
If $U$ is bounded, we complete the proof by Lemma~\ref{lem:localconti}. Therefore, we may assume that $U$ is unbounded. Let $n,n' \in \N$ with $n>n'$. Then, 
\begin{equation}
0 \le p_{t}^{U,n}(x,y)-p_{t}^{U,n'}(x,y) \le p_{t}^{n}(x,y)-p_{t}^{n'}(x,y),\quad t>0,\ x,y\in \ob{D}. \label{repeat}
\end{equation}
To see the latter inequality, note that for $(x,y) \in ( \ob{D} \times \ob{D}) \setminus ((U \cap K_{n'})\times (U \cap K_{n'}))$ this inequality holds trivially. For $(x,y) \in (U \cap K_{n'})\times (U \cap K_{n'})$, it holds that
\begin{align*}
 &p_{t}^{n}(x,y)-p_{t}^{n'}(x,y)-p_{t}^{U,n}(x,y)+p_{t}^{U,n'}(x,y)  \\
&=\lim_{r \to 0}\frac{P_{x}(X_t \in \ob{D} \cap B(y,r),\ \tau_{U } \vee \tau_{n'} \le t<\tau_n)}{m(\ob{D} \cap B(y,r))} \ge 0
\end{align*}
by the continuity of the densities of $X^{\ob{D} \cap B(y,r)}$, $r>0$, which is assured by Lemma~\ref{lem:localconti}. 
By using  \eqref{repeat} and repeating the same argument as in the proof of Theorem~\ref{thm:thm1}~(iii), we complete the proof. 
\end{proof}

\section{Application}
For each open subset $O \subset \ob{D}$, we define $\text{Cap}_{\ob{D}}(O)$ by 
\begin{align*}
\text{Cap}_{\ob{D}}(O)=\inf \{\|f\|_{H^{1}(D)}^2 \mid f \in H^{1}(D),\ f \ge 1,\ m\text{-a.e. on }O \}.
\end{align*}
For each subset $A \subset \ob{D}$, we set $$\text{Cap}_{\ob{D}}(A)=\inf \{\text{Cap}_{\ob{D}}(O) \mid A \subset O,\ O \subset \ob{D}\text{ is an open subset}\}.$$
Let $\mathcal{H}^{d-1}$ be the $(d-1)$-dimensional Hausdorff measure on $\R^d$. We denote by $\sigma$ the restriction on $\partial D:=\ob{D} \setminus D$. $\sigma$ is a Radon measure on $(\partial D, \mathcal{B}(\partial D))$. It is shown in \cite[Proposition~2.4]{Ma} that $\sigma$ is also a smooth measure: $\sigma(A)=0$ whenever $\text{Cap}_{\ob{D}}(A)=0$, $A \subset \partial D$. By \cite[Theorem~5.1.3]{FOT}, there is a unique positive continuous additive functional $L=\{L_t\}_{t \ge 0}$ of $X$ such that 
\begin{equation*}
\int_{\ob{D}}h(x)E_{x}\left[\int_{0}^{t}f(X_s)\,dL_s \right]\,dm(x)=\int_{0}^{t}\int_{\partial D}f(x)(p_{s}h)(x)\,d\sigma(x)\,ds
\end{equation*}
for any $t>0$ and $f,h \in \mathcal{B}_{b}(\ob{D})$. We call $L$ the {\it boundary local time} of $X$. By \eqref{ac} and the Markov property of $X$, it holds that
\begin{equation}
E_{x}\left[\int_{0}^{t}f(X_s)\,dL_s \right]=\int_{0}^{t}\int_{\partial D}p_{s}(x,y)f(y)\,d\sigma(y)\,ds\label{eq:absigma}
\end{equation}
for any $t>0,\, x \in \ob{D}$, and $f\in \mathcal{B}_{b}(\ob{D})$.

$\sigma$ is in the local Kato class of $X$ in the sense of \cite{CK}.
\begin{theorem}\label{localkato}
It holds that 
$$\lim_{t \to 0}\sup_{x \in \ob{D}}E_{x}\left[\int_{0}^{t}\bone_{K}(X_s)\,dL_s \right]=0$$
for any relatively compact open subset $K$ of $\ob{D}$.
\end{theorem}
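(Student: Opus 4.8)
The plan is to reduce the global supremum over $\ob D$ to an estimate that only involves the heat kernel on a fixed bounded piece, using \eqref{eq:absigma} and the local Gaussian bound of Theorem~\ref{thm:thm1}~(ii). Fix a relatively compact open set $K\subset\ob D$. Choose $R>0$ so large that $K\subset\ob D_{\eps,R}$ for some fixed $\eps\in(0,1)$ (possible since $K$ is relatively compact), and pick $N\in\N$ as in Section~4 so that $K\subset\ob D_{R+1}\subset K_N$. By \eqref{eq:absigma},
\[
E_{x}\!\left[\int_{0}^{t}\bone_{K}(X_s)\,dL_s\right]=\int_{0}^{t}\int_{\partial D\cap K}p_{s}(x,y)\,d\sigma(y)\,ds ,
\]
so it suffices to bound $\int_0^t\int_{\partial D\cap K}p_s(x,y)\,d\sigma(y)\,ds$ uniformly in $x\in\ob D$ and show it tends to $0$ as $t\to0$. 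We split into the two cases $x\in\ob D_R$ and $x\notin\ob D_R$, using the three-case estimate of Theorem~\ref{thm:thm1}~(ii) (valid for a.e.\ $y\in\ob D_{\eps,R}$, hence for $\sigma$-a.e.\ $y\in\partial D\cap K$ since $\sigma$ is smooth and $\partial D\cap K\subset\ob D_{\eps,R}$).

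For $x\notin\ob D_R$ the relevant bound is $p_s(x,y)\le c_{R,\eps}a_R e^{(2s)\wedge R^2}\{(2s)\wedge R^2\}^{-d/2}\exp(-\eps\gamma_R R^2/s)$ for $s<R^2$, which for small $s$ is dominated by $C\,s^{-d/2}\exp(-\eps\gamma_R R^2/s)\to0$ and is bounded; integrating against the finite measure $\sigma(\partial D\cap K)<\infty$ over $s\in(0,t)$ gives a bound that vanishes as $t\to0$ by dominated convergence (the integrand is bounded uniformly in $s$ by an integrable-in-$s$, $s$-independent-in-$x$ function). For $x\in\ob D_R$ we use $p_s(x,y)\le c_{R,\eps}a_R e^{s}s^{-d/2}\exp(-\eps\gamma_R|x-y|^2/s)$; the key point is the uniform integrability in $y$ of the Gaussian against $\sigma$. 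Since $\sigma=\mathcal H^{d-1}|_{\partial D}$ and $\partial D\cap K$ lies in a bounded Lipschitz piece, $\sigma$ is $(d-1)$-Ahlfors regular on $K$, i.e.\ $\sigma(B(y,\rho))\le C_K\rho^{d-1}$ for $y\in K$, $\rho\le\operatorname{diam}K$. A standard layer-cake / dyadic-annulus computation then gives
\[
\int_{\partial D\cap K}e^{s}s^{-d/2}\exp\!\left(-\eps\gamma_R|x-y|^2/s\right)d\sigma(y)\le C_K'\,e^{s}s^{-1/2},
\]
uniformly in $x\in\ob D_R$ and $s\in(0,R^2)$ — one loses only $\rho^{d-1}$ instead of $\rho^{d}$, so the power of $s$ improves from $s^{-d/2}$ to $s^{-1/2}$. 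Hence $\int_0^t\!\int_{\partial D\cap K}p_s(x,y)\,d\sigma(y)\,ds\le C_K''\int_0^t e^{s}s^{-1/2}\,ds=O(\sqrt t)\to0$, uniformly in $x\in\ob D_R$.

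Combining the two cases yields $\sup_{x\in\ob D}E_x\!\big[\int_0^t\bone_K(X_s)\,dL_s\big]\to0$ as $t\to0$, which is the claim. The main obstacle is the second step: verifying that $\sigma$ restricted to $\partial D\cap K$ is $(d-1)$-Ahlfors upper regular so that the Gaussian integrates against $\sigma$ with the gain of one power of $s$; this relies on the local Lipschitz structure of $\partial D$ (each $D\cap U_n$ is a bounded Lipschitz domain, whose boundary is $(d-1)$-Ahlfors regular), together with the fact that only finitely many of the charts cover the compact set $\ob K$. Everything else is dominated convergence in the time variable and the already-established local heat kernel bound of Theorem~\ref{thm:thm1}~(ii).
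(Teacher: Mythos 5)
Your overall strategy is sound and, in its key analytic step, genuinely different from the paper's. Both arguments reduce the claim via \eqref{eq:absigma} to a uniform-in-$x$ bound on $\int_0^t\int_{\partial D\cap K}p_s(x,y)\,d\sigma(y)\,ds$ and split the supremum into ``$x$ near the boundary piece'' and ``$x$ far from it,'' the far case being killed by the factor $\exp(-c/s)$ and the near case yielding $O(\sqrt t\,)$. The difference is how the surface integral of the Gaussian is controlled: you integrate the Gaussian directly against $\sigma$ using upper $(d-1)$-Ahlfors regularity of $\mathcal{H}^{d-1}$ on the Lipschitz boundary (finitely many Lipschitz charts cover the compact closure of $K$, and balls centered off the boundary are handled by recentering) together with a dyadic-annulus computation, getting the bound $Cs^{-1/2}$. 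The paper never invokes Ahlfors regularity: it works inside a bounded Lipschitz domain $E$ whose boundary contains $\partial D\cap K$, approximates the surface measure by the normalized Lebesgue measure $\eps^{-1}m|_{E_\eps}$ on collar neighborhoods $E_\eps$, bounds the tube integral of the Gaussian by $c_3 s^{-1/2}$ via \cite[Lemma~7.4]{Ma}, and passes to the surface integral with \cite[Lemma~7.1]{CF}. Your route is more self-contained and geometrically transparent; the paper's avoids stating any regularity of $\mathcal{H}^{d-1}|_{\partial D}$ by reusing lemmas already available in \cite{Ma} and \cite{CF}. Either way the quantitative outcome is the same $s^{-1/2}$ gain.

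One step is, however, wrongly justified as written: you assert that the bound of Theorem~\ref{thm:thm1}~(2), valid for $m$-a.e.\ $y\in\ob{D}_{\eps,R}$, therefore holds for $\sigma$-a.e.\ $y\in\partial D\cap K$ ``since $\sigma$ is smooth.'' This inference is invalid: $\sigma$ is carried by $\partial D$, which is $m$-null, so an $m$-a.e.\ statement gives no information $\sigma$-a.e.; smoothness of $\sigma$ (charging no sets of zero capacity) does not imply absolute continuity with respect to $m$. The gap is real but immediately repairable with material already in the theorem: by part (3) the kernel has a version continuous on $(0,\infty)\times\ob{D}\times\ob{D}$, and since the estimate of part (2) holds $m$-a.e.\ on the open set $\ob{D}_{\eps,R}$ and $m$ has full support there, the continuous version satisfies the estimate for \emph{every} $y\in\ob{D}_{\eps,R}$, hence for every $y\in\partial D\cap K$; one should also read \eqref{eq:absigma} with this continuous version. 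This is precisely why the paper invokes the continuity statement alongside the Gaussian bound. With that correction your proof goes through.
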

\begin{proof}
Since $D$ is a Lipschitz domain, $\partial D \cap K$ is a part of the boundary of a bounded Lipschitz domain $E$ of $\R^d$. It follows from \eqref{eq:absigma} that for any $t>0$
\begin{align*}
&\sup_{x \in \ob{D}}E_{x}\left[\int_{0}^{t \wg 1}\bone_{K}(X_s)\,dL_s \right] \\
&\le \int_{0}^{t \wg 1}\sup_{x \in \ob{E}}\int_{\partial D \cap K}p_{s}(x,y)\,d\mathcal{H}^{d-1}(y)\,ds+ \int_{0}^{t \wg 1}\sup_{x \in \ob{D}\setminus \ob{E}}\int_{\partial D \cap K}p_{s}(x,y)\,d\mathcal{H}^{d-1}(y)\,ds \\
&=:I_1+I_2.
\end{align*}
By Theorem~\ref{thm:thm1}~(i) and (iii), there exist $c_1, c_2 \in (0,\infty)$ depending on $E$ such that for any $t>0$
\begin{align}
I_1&\le c_1\times \sup_{x \in \ob{E}}\int_{0}^{t \wg 1}s^{-d/2}\int_{\partial E}\exp(- c_2|x-y|^2/s )\,d\mathcal{H}^{d-1}(y)\,ds, \label{eq:I1} \\
 I_2& \le c_1\times \mathcal{H}^{d-1}(\partial E) \times \int_{0}^{t \wg 1}  (2s)^{-d/2}  \exp(-c_2/s )\,ds. \notag
\end{align}
It is easy to see $\lim_{t \to 0}I_{2}=0.$ Thus, it remains to show $\lim_{t \to 0}I_1=0$. For $\eps>0$, we define $E_{\eps}=\{x \in E \mid \text{dist}(x,\partial E)<\eps\}$. As $E$ is a bounded Lipschitz domain, there exist $\eps_{0}>0$ and $c_3=c_3(d,E,c_2)>0$ such that 
\begin{equation}
\frac{1}{\eps}\int_{E_{\eps}}s^{-d/2}\exp (- c_2 |x-y|^2/s )\,dm(y) \le c_3/\sqrt{s} \label{eq:I11}
\end{equation}
for any $s \in (0,1],$ any $\eps \in (0,\eps_0)$ and any $x \in \ob{E}$. See \cite[Lemma~7,4]{Ma} for the proof. Using \eqref{eq:I1}, \eqref{eq:I11} and \cite[Lemma~7.1]{CF}, we obtain
\begin{align*}
\varlimsup_{t \to 0}I_1&\le c_1  \times \varlimsup_{t \to 0}  \sup_{x \in E} \varliminf_{\eps \to 0} \int_{0}^{t \wg 1}\frac{1}{\eps}\int_{E_{\eps}}s^{-d/2}\exp (- c_2 |x-y|^2/s )\,dm(y)\,ds \\
&\le c_{1}c_3 \times \varlimsup_{t \to 0}\int_{0}^{t \wg 1} s^{-1/2}\,ds=2c_{1}c_{3}\times \varlimsup_{t \to 0}\sqrt{t}=0,
\end{align*}
which completes the proof.
\end{proof}

\begin{remark}
If $L$ satisfies $\lim_{t \to 0}\sup_{x \in \ob{D}}E_{x}[L_t]=0$, $\sigma$ is said to be in the Kato class of $X$. 
If $D$ is thin at infinity: $\lim_{|x| \to \infty,\ x \in \ob{D}}m(D \cap B(x,1))=0$, it is shown in the proof of \cite[Corollary~2.8]{Ma} that $\lim_{|x| \to \infty,\ x \in \ob{D}}E_{x}[\exp(-L_t)]=0$  for any $t>0$. It follows from Jensen's inequality that $\sup_{x \in \ob{D}}E_{x}[L_t] \ge \lim_{|x| \to \infty,\ x \in \ob{D}}E_{x}[L_t]=\infty$ for any $t>0$. Thus, $\sigma$ is generally not in the Kato class of $X$. 
\end{remark}

\noindent
{\it Acknowledgement}
The author would like to thank professor Naotaka Kajino for his helpful comments on the proof of Lemma~\ref{lem:ind}. 
\begin{bibdiv}
\begin{biblist}
\bib{BH}{article}{
   author={Bass, Richard F.},
   author={Hsu, Pei},
   title={Some potential theory for reflecting Brownian motion in H\"older and
   Lipschitz domains},
   journal={Ann. Probab.},
   volume={19},
   date={1991},
   number={2},
   pages={486--508}
}

\bib{BCM}{article}{
   author={Burdzy, Krzysztof},
   author={Chen, Zhen-Qing},
   author={Marshall, Donald E.},
   title={Traps for reflected Brownian motion},
   journal={Math. Z.},
   volume={252},
   date={2006},
   number={1},
   pages={103--132}
}

\bib{CK}{article}{
   author={Chen, Zhen-Qing},
   author={Kuwae, Kazuhiro},
   title={On doubly Feller property},
   journal={Osaka J. Math.},
   volume={46},
   date={2009},
   number={4},
   pages={909--930}
}

\bib{CF}{article}{
   author={Chen, Zhen-Qing},
   author={Fan, Wai-Tong},
   title={Systems of interacting diffusions with partial annihilation
   through membranes},
   journal={Ann. Probab.},
   volume={45},
   date={2017},
   number={1},
   pages={100--146},
   issn={0091-1798},
}
\bib{D}{book}{
   author={Davies, E. B.},
   title={Heat kernels and spectral theory},
   series={Cambridge Tracts in Mathematics},
   volume={92},
   publisher={Cambridge University Press, Cambridge},
   date={1990},
   pages={x+197}
}

\bib{EE}{book}{
   author={Edmunds, D. E.},
   author={Evans, W. D.},
   title={Spectral theory and differential operators},
   series={Oxford Mathematical Monographs},
   note={Oxford Science Publications},
   publisher={The Clarendon Press, Oxford University Press, New York},
   date={1987},
   pages={xviii+574},
   isbn={0-19-853542-2},
}

\bib{FOT}{book}{
   author={Fukushima, Masatoshi},
   author={Oshima, Yoichi},
   author={Takeda, Masayoshi},
   title={Dirichlet forms and symmetric Markov processes},
   series={De Gruyter Studies in Mathematics},
   volume={19},
   edition={Second revised and extended edition},
   publisher={Walter de Gruyter \& Co., Berlin},
   date={2011},
   pages={x+489},
   isbn={978-3-11-021808-4}
}

\bib{FT}{article}{
   author={Fukushima, Masatoshi},
   author={Tomisaki, Matsuyo},
   title={Construction and decomposition of reflecting diffusions on
   Lipschitz domains with H\"{o}lder cusps},
   journal={Probab. Theory Related Fields},
   volume={106},
   date={1996},
   number={4},
   pages={521--557},
   issn={0178-8051}
   }

\bib{GK}{article}{
   author={Grigor'yan, Alexander},
   author={Kajino, Naotaka},
   title={Localized upper bounds of heat kernels for diffusions via a
   multiple Dynkin-Hunt formula},
   journal={Trans. Amer. Math. Soc.},
   volume={369},
   date={2017},
   number={2},
   pages={1025--1060},
}

   \bib{GS}{article}{
   author={Gyrya, Pavel},
   author={Saloff-Coste, Laurent},
   title={Neumann and Dirichlet heat kernels in inner uniform domains},
   language={English, with English and French summaries},
   journal={Ast\'{e}risque},
   number={336},
   date={2011},
   pages={viii+144},
   issn={0303-1179}
}

\bib{Ki}{article}{
   author={Kigami, Jun},
   title={Volume doubling measures and heat kernel estimates on self-similar
   sets},
   journal={Mem. Amer. Math. Soc.},
   volume={199},
   date={2009},
   number={932},
   pages={viii+94}
}

\bib{KKT}{article}{
   author={Kurniawaty, Mila},
   author={Kuwae, Kazuhiro},
   author={Tsuchida, Kaneharu},
   title={On the doubly Feller property of resolvent},
   journal={Kyoto J. Math.},
   volume={57},
   date={2017},
   number={3},
   pages={637--654}
}
\bib{Ma}{article}{
   author={Matsuura, Kouhei},
   title={Doubly Feller property of Brownian motions with Robin boundary condition},
   journal={To appear in Potential Anal. Available at \url{https://link.springer.com/article/10.1007/s11118-018-09758-4}}
}
\bib{Ma2}{article}{
   author={Matsuura, Kouhei},
   title={$L^p$-spectral independence of Neumann laplacians on horn-shaped domains},
   journal={preprint}
}
\bib{O}{book}{
   author={Ouhabaz, El Maati},
   title={Analysis of heat equations on domains},
   series={London Mathematical Society Monographs Series},
   volume={31},
   publisher={Princeton University Press, Princeton, NJ},
   date={2005},
   pages={xiv+284},
}
\end{biblist}
\end{bibdiv}

\end{document}